\documentclass{article}
\usepackage{geometry}
\usepackage{latexsym,amsthm,amsmath,amssymb,url}
\usepackage[ruled, linesnumbered]{algorithm2e}
\usepackage[utf8]{inputenc}
\usepackage{tikz,authblk}
\usetikzlibrary{arrows.meta}
\usepackage{comment}
\usetikzlibrary{decorations.pathreplacing}

\newcommand{\YC}[1]{{#1}}
\newcommand{\YCb}[1]{{#1}}
\newcommand{\LH}[1]{{ #1}}
\newcommand{\GG}[1]{{ #1}}

\newtheorem{theorem}{Theorem}[section]
\newtheorem{lemma}[theorem]{Lemma}
\newtheorem{corollary}[theorem]{Corollary}

\newtheorem{conjecture}[theorem]{Conjecture}

\newtheorem{Problem}[theorem]{Problem}
\newtheorem{observation}[theorem]{Observation}

\title{Number of Subgraphs and Their Converses in Tournaments and New Digraph Polynomials}
\author{Jiangdong Ai\footnote{School of Mathematical Sciences and LPMC, Nankai University, Tianjin 300071, China. Partially supported
		by the Fundamental Research Funds for the Central Universities, Nankai University (No. 63243055). {\tt jd@nankai.edu.cn}.}, Gregory Gutin \thanks{Department of Computer Science. Royal Holloway University of London. {\tt g.gutin@rhul.ac.uk}.}, Hui Lei\footnote{School of Statistics and Data Science, LPMC and KLMDASR, Nankai University, Tianjin 300071, 
		China.  Partially supported
		by the NSFC grant (No.\ 12371351). {\tt hlei@nankai.edu.cn}.}, Anders Yeo \thanks {Department of Mathematics and Computer Science, University of Southern Denmark. {\tt andersyeo@gmail.com}.}, Yacong Zhou\thanks{Department of Computer Science. Royal Holloway University of London. {\tt Yacong.Zhou.2021@live.rhul.ac.uk}.} }
\date{}

\begin{document}
	
	\maketitle
	\begin{abstract}
		
		An oriented graph $D$ is {\it converse invariant} if, for any tournament $T$,  the number of copies of  $D$ in $T$ is equal to that of its converse $-D$. El Sahili and Ghazo Hanna [J. Graph Theory 102 (2023), 684-701] showed that any oriented graph $D$ with maximum degree at most 2 is converse invariant. They proposed a question: Can we characterize all converse invariant oriented graphs?
		
		In this paper, we introduce a digraph polynomial and employ it to give a necessary condition for an oriented graph to be converse invariant. This polynomial serves as a cornerstone in proving all the results presented in this paper. In particular, we characterize all orientations of trees with diameter at most 3 that are converse invariant. We also show that all orientations of regular graphs are not converse invariant if $D$ and $-D$ have different degree sequences. In addition, in contrast to the findings of \GG{El Sahili and Ghazo Hanna}, we prove that every \YC{connected} graph $G$ with maximum degree at least $3$, admits an orientation $D$ of $G$ such that $D$ is not converse invariant. We pose one conjecture.
	\end{abstract}	
	
	\section{Introduction}
	We refer to \cite{B-G} for terminology and notation not introduced here.   All graphs considered throughout this paper contain neither loops nor multiple edges.  An {\em arc} in a digraph from vertex $u$ to vertex $v$ is denoted by $(u,v)$. An {\it orientation} of a graph $G$ is a digraph $D$ obtained from $G$ by replacing every edge $uv$ by either arc $(u,v)$ or arc $(v,u)$. A digraph with at most one arc between any pair of vertices is called an {\em oriented graph} (or {\em orgraph} for short). A {\it tournament} is an orientation of a complete graph. 
	\GG{A {\em Hamiltonian dipath} in a
		tournament $T$} \LH{is a directed path containing all} \GG{vertices of $T$}. \LH{The definitions are similar for cycles.}
	Let $D=(V(D),A(D))$ be a digraph. \YC{For a vertex $v\in V (D)$, we denote by $d^+_D(v)$ and $d^-_D(v)$ the out-degree and in-degree of $v$, respectively. The {\it degree} of $v$ in $D$, denoted by $d_D(v)$, is the sum of its out-degree and in-degree in $D$. We may drop the $D$ subscripts if $D$ is clear from the context. We use $\Delta(H)$ to denote the maximum degree of the (di)graph $H$.}

	\LH{Counting subdigraphs in a tournament is a well-known problem.  The lower and upper bounds for the maximum number $P(n)$ of Hamiltonian dipaths in a tournament on $n$ vertices were intensively studied. Szele \cite{S1943} showed that $\frac{n!}{2^{n-1}} \leq P(n)\le O(\frac{n!}{2^{3n/4}})$.  Alon \cite{A1990} improved the upper bound by showing that $P(n)\le  O(n^{3/2}\frac{n!}{2^{n-1}})$. Adler, Alon and Ross \cite{AAR2001} improved the lower bound to $(e-o(1))\frac{n!}{2^{n-1}}$} \GG{and asked whether $P(n)=\Theta(\frac{n!}{2^{n-1}})$.} \LH{Fridgut and Kahn \cite{FK2005} improved the upper bound for $P(n)$ to} \GG{$O(n^{3/2-\xi}\frac{n!}{2^{n-1}})$, where $\xi\in [0.2507,0.2508]$}.  \LH{Moon \cite{M1972} and Busch  \cite{B2006} examined the upper and lower bounds for the minimum number of  Hamiltonian dipaths in strong tournaments on $n$ vertices. In 2007,  Moon and Yang \cite{M-Y}, constructed specific tournaments, referred to as `special chains', which determined the minimum number of Hamiltonian dipaths in strong tournaments on $n$ vertices. 
		Relevant results about Hamiltonian dicycles can be found in  \cite{AAR2001, A1990, C2007,FK2005,F-L,F-K-S,T1980}.
		We refer the readers to \cite{Y2017} for the problem of counting in tournaments for general spanning subdigraphs.}

	\LH{In this paper, we \GG{compare} the number of subdigraphs and their converses in every tournament $T$.} The {\em converse} of $D$, denoted by $-D$, is the digraph on the same vertex set where $(u, v)\in A(-D)$ if and only if $(v, u)\in A(D)$. Let $f_T(D)$ denote the number of copies of  $D$ in a tournament $T$. An orgraph $D$ is {\it converse invariant} if $f_T(D)=f_T(-D)$ for every tournament $T$. 
	
	Rosenfeld \cite{R1974} in 1974 proved that antidirected paths are converse invariant. El Sahili and Ghazo Hanna \cite{SH2023}  generalized Rosenfeld's result for any type of oriented paths, and also for cycles. They showed the following result.
	
	\begin{theorem}[\cite{SH2023}]\label{thm:SH}
		\YC{Let $G$ be a graph with $\Delta(G)\leq 2$. Then, every orientation $D$ of $G$ is converse invariant.}
	\end{theorem}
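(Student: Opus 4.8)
The plan is to recast the theorem in terms of labelled (injective‑homomorphism) counts and then prove it by a single strong induction on the number of vertices. Since $\Delta(G)\le 2$, the graph $G$ is a vertex‑disjoint union of paths and cycles, so $D$ is a vertex‑disjoint union of oriented paths and oriented cycles. For digraphs $H$ and $T$, let $N_T(H)$ be the number of injective homomorphisms of $H$ into $T$; then $f_T(H)=N_T(H)/|\mathrm{Aut}(H)|$, and since a permutation of $V(H)$ is an automorphism of $H$ exactly when it is one of $-H$, we have $|\mathrm{Aut}(H)|=|\mathrm{Aut}(-H)|$, so $H$ is converse invariant if and only if $N_T(H)=N_T(-H)$ for every tournament $T$. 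Thus it suffices to prove: \emph{for every digraph $D$ each of whose components is an oriented path or an oriented cycle, and every tournament $T$, one has $N_T(D)=N_T(-D)$.} I would prove this by strong induction on $|V(D)|$, and at each value of $|V(D)|$ I would treat in turn the cases: $D$ disconnected; $D$ a single oriented path; $D$ a single oriented cycle.

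The induction is powered by two elementary identities. \emph{(Splitting.)} If $D=D_1\sqcup D_2$, then sorting the copies of $D$ in $T$ by the vertex set $S$ carrying the copy of $D_1$,
\[
  N_T(D_1\sqcup D_2)=\sum_{\substack{S\subseteq V(T)\\ |S|=|V(D_1)|}} N_{T[S]}(D_1)\cdot N_{T\setminus S}(D_2).
\]
\emph{(Arc deletion in a path.)} Let $P$ be an oriented path, $e$ one of its arcs, and $P^{+},P^{-}$ the two orientations of the underlying path agreeing with $P$ on every arc except $e$; writing $P-e=P_1\sqcup P_2$, the injective homomorphisms of $P^{+}$ into $T$ and of $P^{-}$ into $T$ partition those of $P_1\sqcup P_2$ into $T$ (the two images of the ends of $e$ are distinct, so $T$ has exactly one arc between them), whence
\[
  N_T(P^{+})+N_T(P^{-})=N_T(P_1\sqcup P_2).
\]
The same argument applies to an oriented cycle $C$: deleting an arc leaves a Hamiltonian path $P$ of $C$ on all $|V(C)|$ vertices, and $N_T(C^{+})+N_T(C^{-})=N_T(P)$.

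In the inductive step the disconnected case is immediate: if $D=D_1\sqcup D_2$ with both parts nonempty, each part has fewer than $|V(D)|$ vertices, so the splitting identity and the inductive hypothesis (applied over all induced subtournaments $T[S]$) give $N_T(D)=\sum_S N_{T[S]}(-D_1)\,N_{T\setminus S}(-D_2)=N_T(-D)$. Next let $D=P$ be an oriented path on $n$ vertices, and set $\delta(H):=N_T(H)-N_T(-H)$. For \emph{every} arc $e$ of $P$, arc deletion gives $\delta(P^{+})+\delta(P^{-})=\delta(P_1\sqcup P_2)$, and the right‑hand side is $0$, because $P-e$ is a digraph on $n$ vertices with at least two components — a case already settled at this level. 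Hence flipping any single arc of $P$ changes the sign of $\delta$; as the orientations of a fixed underlying path form a connected hypercube under single‑arc flips, there is an integer $c$ (depending only on $T$ and $n$) with $\delta(O)=(-1)^{w(O)}c$ for every orientation $O$ of this path, where $w(O)$ is the number of arcs of $O$ pointing forward along a fixed traversal $v_1,\dots ,v_n$. Now $-P$ reverses all $n-1$ arcs, so $w(-P)=n-1-w(P)$, and $\delta(-P)=-\delta(P)$ yields $(-1)^{\,n-1}c=-c$; while relabelling $v_i\mapsto v_{n+1-i}$ produces an oriented path isomorphic to $P$ with $n-1-w(P)$ forward arcs, so $\delta(P)=(-1)^{\,n-1-w(P)}c$, i.e.\ $(-1)^{\,n-1}c=c$. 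Together these force $c=-c$, hence $c=0$, hence $N_T(P)=N_T(-P)$. The oriented cycle case is identical: deleting any arc of $C$ (on $n$ vertices) leaves an $n$‑vertex oriented path, already handled one case earlier, so flipping any arc of $C$ negates $\delta$, and the same sign computation gives $\delta(C)=0$.

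The part I expect to be the main obstacle is not any single computation but getting this scaffolding right. A recursion that deletes only a \emph{pendant} arc of a path is genuinely too weak: it determines $\delta$ only on the two end arcs and leaves, e.g., general $6$‑vertex oriented paths undetermined. What makes everything close is to delete \emph{every} arc, which forces the right‑hand side of the arc‑deletion identity to be a disjoint union on the \emph{same} number of vertices as $P$; this is exactly why the induction must be organised so that all disconnected graphs on $n$ vertices — and, for the cycle step, all $n$‑vertex oriented paths — are disposed of before the connected graph currently under study. Once that ordering is fixed, the only remaining care is the sign bookkeeping in the last step, which is routine.
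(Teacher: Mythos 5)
The paper does not actually prove Theorem~\ref{thm:SH}: it is quoted from El Sahili and Ghazo Hanna \cite{SH2023}, so there is no in-paper argument to measure yours against, and your proof is necessarily independent of the paper's main tool (the polynomial $P_D$ only gives \emph{necessary} conditions, so it could not yield this sufficiency statement). That said, your argument is correct and self-contained. Both identities are sound: the splitting identity is an exact decomposition because injectivity forces the images of $V(D_1)$ and $V(D_2)$ to be disjoint and there are no arcs between the parts, and the arc-deletion identity only needs the images of the two endpoints of $e$ to be distinct so that $T$ carries exactly one arc between them. The induction is well-founded under the ordering (number of vertices; disconnected $<$ path $<$ cycle), and your diagnosis of the crux is right: deleting an \emph{arbitrary} arc keeps $P-e$ on the same $n$ vertices but disconnected, a case already settled, so every single-arc flip negates $\delta$; this holds for every orientation of the fixed underlying path, which is what makes $\delta(O)=(-1)^{w(O)}c$ well defined on the whole flip hypercube (worth stating explicitly). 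The sign bookkeeping also checks out: for an $n$-vertex path the converse relation gives $(-1)^{n-1}c=-c$ (killing $c$ only for odd $n$) while the end-to-end relabelling gives $(-1)^{n-1}c=c$ (killing $c$ only for even $n$), and together they force $c=0$ in all cases; for the cycle the same two relations appear with $n$ in place of $n-1$, the parities swap roles, and again $c=0$. Degenerate cases (single-vertex components after deleting a pendant arc, the $n=2$ path) are covered by the same scheme.
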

	
	In the same paper, El Sahili and Ghazo Hanna proposed the following natural problem.
	
	\begin{Problem}[\cite{SH2023}]
		Can we characterize all converse invariant orgraphs?   
	\end{Problem}

	In this paper, we introduce the following digraph polynomial of $x$
	\[P_D(x)=\sum_{u\in V(D)} (1+x)^{d^+_D(u)}(1-x)^{d^-_D(u)},\] 
	and use it to give the following necessary condition for an orgraph to be converse invariant. 
	\begin{theorem}\label{lem: polyn}
		Let $D$ be a converse invariant orgraph. Then $P_D(x)=P_{-D}(x)$.
	\end{theorem}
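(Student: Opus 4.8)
The plan is to recover the polynomial $P_D$ from a carefully weighted count of copies of $D$ taken over \emph{all} tournaments on a fixed vertex set. The point to keep in mind is that the obvious invariants are useless: $\sum_T f_T(D)$ is the same for $D$ and $-D$ no matter what, and so is the sum weighted by the number of ``reversed'' arcs, because reversing a linear order is a symmetry. So I would instead attach a weight that remembers the orientation around a single vertex. Fix $n\ge k:=|V(D)|$, take the vertex set $[n]$, distinguish a vertex $w\in[n]$, introduce an indeterminate $t$, and set
\[ \Phi_D(t)\ :=\ \sum_{T\text{ tournament on }[n]} f_T(D)\, t^{\,d^+_T(w)} . \]
Since $D$ is converse invariant, $f_T(D)=f_T(-D)$ for every $T$, hence $\Phi_D(t)=\Phi_{-D}(t)$ identically in $t$.

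Next I would evaluate $\Phi_D(t)$ by exchanging the two summations: write $f_T(D)$ as the number of embeddings $\phi\colon V(D)\hookrightarrow[n]$ respecting all arcs, and sum over $\phi$ first. For a fixed $\phi$, the tournaments $T$ containing $\phi(D)$ are obtained by orienting the remaining $\binom n2-|A(D)|$ pairs arbitrarily; orientations of pairs not at $w$ only contribute an overall power of $2$, a free pair at $w$ contributes a factor $1+t$, and when $w=\phi(u_0)$ exactly $d_D(u_0)$ of the pairs at $w$ are forced, of which $d^+_D(u_0)$ point out of $w$ (contributing $t^{\,d^+_D(u_0)}$). Separating the cases $w\in\phi(V(D))$ and $w\notin\phi(V(D))$ and counting embeddings (falling factorials $(n-1)^{\underline{k}}$ and $(n-1)^{\underline{k-1}}$) yields a closed form
\[ \Phi_D(t)\ =\ C_n\Big[(n-1)^{\underline{k}}(1+t)^{n-1}\ +\ (n-1)^{\underline{k-1}}\sum_{u\in V(D)} 2^{d_D(u)}\, t^{\,d^+_D(u)}(1+t)^{\,n-1-d_D(u)}\Big], \]
with $C_n$ depending only on $n$ and $k$. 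In this expression the first summand and all the constants are visibly unchanged by $D\mapsto -D$, while in the second summand the substitution $D\mapsto -D$ merely replaces $t^{\,d^+_D(u)}$ by $t^{\,d^-_D(u)}$ (the degree $d_D(u)$ is preserved). Equating $\Phi_D=\Phi_{-D}$ and cancelling $(n-1)^{\underline{k-1}}\neq 0$ gives
\[ \sum_{u\in V(D)} 2^{d_D(u)}\, t^{\,d^+_D(u)}(1+t)^{\,n-1-d_D(u)}\ =\ \sum_{u\in V(D)} 2^{d_D(u)}\, t^{\,d^-_D(u)}(1+t)^{\,n-1-d_D(u)} . \]

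Finally I would convert this into the claimed identity via the change of variable $x=\frac{t-1}{t+1}$, so that $1+x=\frac{2t}{1+t}$ and $1-x=\frac{2}{1+t}$: dividing the last display by $(1+t)^{n-1}$ and using $d_D(u)=d^+_D(u)+d^-_D(u)$ turns the left-hand side into $\sum_u(1+x)^{d^+_D(u)}(1-x)^{d^-_D(u)}=P_D(x)$ and the right-hand side into $P_{-D}(x)$. This holds for all $t$ (equivalently for all $x\neq 1$), an infinite set, so the two polynomials $P_D$ and $P_{-D}$ coincide, as required.

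I expect the only genuine obstacle to be the first, conceptual step: realizing that the tournament sum must be graded by $t^{\,d^+_T(w)}$ for a single distinguished vertex $w$, rather than by any global statistic, precisely because every ``symmetric'' weighting produces an identity that $D$ and $-D$ satisfy unconditionally. Everything after that — evaluating $\Phi_D(t)$ and performing the substitution — is routine bookkeeping; the only care needed is to count the forced versus free pairs at $w$ correctly and to take $n$ large enough (any $n\ge k$ works) so that $(n-1)^{\underline{k-1}}$ is nonzero and the exponents $n-1-d_D(u)$ are non-negative.
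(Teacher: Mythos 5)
Your proof is correct and is essentially the paper's argument in generating-function clothing: the paper introduces a distinguished vertex $v^*$ whose incident arcs are oriented with bias $1/2\pm p$ and computes $\mathbb{E}(\mathrm{ism}(D,T_p))$ by decomposing over which vertex of $D$ lands on $v^*$, which is exactly your $\Phi_D(t)$ up to the normalization $(1+t)^{-(n-1)}$ and the substitution $x=(t-1)/(t+1)$ (the paper's $x=2p$). The only cosmetic difference is that the paper takes $n=|V(D)|$ so the ``$w\notin\phi(V(D))$'' term does not arise.
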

	
	Theorem \ref{lem: polyn} is utilized as a foundational tool to prove all results in this paper. In particular, we use Theorem \ref{lem: polyn} to show that paths and cycles are the only \YC{connected} graphs with the property that, every orientation of them is converse invariant. 
	
	\begin{theorem}\label{Thm:1}
		Let $G$ be a \YC{connected} graph with $\Delta(G)\geq 3$. Then, \YC{there exists an orientation $D$ of $G$ such that $D$ is not converse invariant}.
	\end{theorem}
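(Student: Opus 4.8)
The plan is to combine Theorem~\ref{lem: polyn} with a short local argument at a vertex of degree at least $3$. First I would reformulate the goal. Since the converse $-D$ is obtained from $D$ by exchanging in- and out-degrees at every vertex, one has $P_{-D}(x)=P_D(-x)$, so $P_D=P_{-D}$ holds exactly when $P_D$ is an even polynomial. By Theorem~\ref{lem: polyn} it therefore suffices to exhibit an orientation $D$ of $G$ for which $P_D$ has a nonzero coefficient in some odd degree. The coefficient of $x^1$ in $P_D$ equals $\sum_{u}\bigl(d^+_D(u)-d^-_D(u)\bigr)=0$ for \emph{every} orientation, so the first real candidate is the coefficient of $x^3$. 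Writing $s_u:=d^+_D(u)-d^-_D(u)$ and $d_u:=d_G(u)=d^+_D(u)+d^-_D(u)$, a routine binomial expansion gives that the coefficient of $x^3$ in $(1+x)^{d^+_D(u)}(1-x)^{d^-_D(u)}$ equals $\tfrac16\,s_u(s_u^2-3d_u+2)$, and hence that $[x^3]P_D=\tfrac16\sum_{u\in V(D)} s_u(s_u^2-3d_u+2)$.

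Next I would argue by contradiction: assume that every orientation of $G$ is converse invariant, so that $\sum_{u} s_u(s_u^2-3d_u+2)=0$ for every orientation of $G$. The key observation is that reversing a single arc $(u,w)$ of an orientation $D$ yields another orientation $D'$ of $G$ in which only $s_u$ and $s_w$ change, namely $s_u\mapsto s_u-2$ and $s_w\mapsto s_w+2$. Subtracting the two vanishing cubic-coefficient sums for $D$ and $D'$ and simplifying (each side of the difference is quadratic in the relevant $s$) should yield the clean identity
\begin{equation*}
(s_u-1)^2-(s_w+1)^2 = d_u-d_w \tag{$\star$}
\end{equation*}
valid for every orientation $D$ of $G$ and every arc $(u,w)$ of $D$, where $s_u,s_w$ are computed in $D$.

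Finally I would exploit $(\star)$ at a vertex $u$ with $d_G(u)\ge 3$ (which exists since $\Delta(G)\ge 3$); fix three distinct neighbours $w,p,r$ of $u$. The first step is a claim: every orientation $E$ of $G$ that contains both arcs $(u,w)$ and $(u,p)$ satisfies $s^E_u=2$. To see this, apply $(\star)$ to the arc $(u,w)$ in $E$, and also in the orientation $E'$ obtained from $E$ by reversing the arc $(u,p)$ — this keeps the arc $(u,w)$ and the value $s_w$, and replaces $s^E_u$ by $s^E_u-2$ — and subtract the two instances to obtain $(s^E_u-1)^2=(s^E_u-3)^2$, i.e.\ $s^E_u=2$. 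Now take any orientation $E_1$ of $G$ that contains the arcs $(u,w),(u,p),(u,r)$ (orient those three edges out of $u$, the rest arbitrarily), and let $E_2$ be $E_1$ with the arc $(u,r)$ reversed. Both $E_1$ and $E_2$ contain the arcs $(u,w)$ and $(u,p)$, so the claim forces $s^{E_1}_u=s^{E_2}_u=2$; but $s^{E_2}_u=s^{E_1}_u-2$, a contradiction. Hence not every orientation of $G$ is converse invariant, which proves the theorem.

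I expect the main obstacle to be purely computational: getting the coefficient of $x^3$ right and massaging $\sum_u s_u(s_u^2-3d_u+2)=0$ into the symmetric form $(\star)$; once $(\star)$ is in hand the combinatorics is very short and uses only that $u$ has three neighbours (and, implicitly, that reversing an arc of an orientation of $G$ is again an orientation of $G$). It is also worth stating explicitly that the linear coefficient of $P_D$ always vanishes, so the cubic term is genuinely the first one that can detect non-invariance. Should the identity $(\star)$ prove unwieldy, an alternative is to argue directly that the two explicit orientations $E_1$ and $E_2$ cannot both have vanishing coefficient of $x^3$, but passing through $(\star)$ makes the contradiction cleanest.
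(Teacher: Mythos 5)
Your proof is correct, and it takes a genuinely different route from the paper's. The paper splits into two cases: when $G$ is a tree it takes the out-branching orientation rooted at a leaf and applies the $x=1$ specialization of $P_D$ (the source/sink count of Corollary~\ref{cor:sources and sinks}); when $G$ contains a cycle it abandons the polynomial altogether and builds an explicit tournament $T$ (a strong tournament dominating a transitive one) in which $f_T(D)>0=f_T(-D)$ for a suitably chosen orientation. Your argument instead works uniformly through the cubic coefficient $c_3$: the identity $[x^3]P_D=\tfrac16\sum_u s_u(s_u^2-3d_u+2)$ checks out against the paper's equation~(\ref{cod:c3}), the arc-reversal computation does give $(s_u-1)^2-(s_w+1)^2=d_u-d_w$, and the two applications of this identity at a degree-$3$ vertex yield $(s_u-1)^2=(s_u-3)^2$ and then the final contradiction exactly as you describe. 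What your approach buys is the absence of any case distinction and of any explicit tournament construction beyond the one already inside Theorem~\ref{lem: polyn}; it also never uses connectivity, only the existence of a vertex of degree at least $3$, so it proves a marginally stronger statement. What it gives up is constructiveness: the paper's proof names a specific bad orientation and (in the cyclic case) a specific distinguishing tournament, whereas yours shows only that one of the finitely many orientations considered must fail. Both proofs ultimately rest on Theorem~\ref{lem: polyn}.
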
 
	
	In \cite{SH2023}, El Sahili and Ghazo Hanna showed that 
	directed $K_{1,d}$ (with all arcs going from the vertex with maximum degree) with $d\geq 3$ is not converse invariant by constructing a special tournament. Building upon their work, our paper establishes, using our method, that all orientations $D$ of stars $K_{1,d}$ with $d\geq 3$ are not converse invariant \YC{unless $D$ is isomorphic to $-D$ (denoted by $D\cong -D$)}. Furthermore, we extend this by showing a stronger result.
	
	\begin{theorem}\label{tree3}
		Let $D$ be an orientation of a tree with diameter at most three that is not a path. Then $D$ is converse invariant if and only if $D\cong -D$, or \YC{$D$ or $-D$ is isomorphic to the digraph shown in Figure \ref{fig:2}}. 
	\end{theorem}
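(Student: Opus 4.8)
The plan is to combine the polynomial criterion of Theorem~\ref{lem: polyn} with a short list of direct arguments about specific tournaments. First note that $P_{-D}(x)=P_D(-x)$, so the conclusion $P_D=P_{-D}$ of Theorem~\ref{lem: polyn} just says that $P_D$ is an \emph{even} polynomial; hence every converse invariant $D$ has $P_D$ even, and I will use this as the main filter. The easy half of the equivalence is also clear: if $D\cong -D$, or if $D$ or $-D$ is the digraph of Figure~\ref{fig:2}, then $f_T(D)=f_T(-D)$ for every tournament $T$ once we have checked the single digraph of Figure~\ref{fig:2} (see below); so only the converse direction needs work. A tree of diameter at most $3$ that is not a path is either a star $K_{1,d}$ with $d\ge 3$, or a double star $S_{n_1,n_2}$ with centers $c_1,c_2$ joined by an edge, where $c_i$ carries $n_i\ge 1$ leaves and $\max(n_1,n_2)\ge 2$. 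I would parametrize an orientation $D$ by the numbers $x_i$ of leaves at $c_i$ whose arc points away from $c_i$ and by the orientation $\epsilon\in\{0,1\}$ of the central edge. A leaf pointing toward its center contributes $(1+x)$ to $P_D$, a leaf pointing away contributes $(1-x)$, and each center $c_i$ contributes $(1+x)^{d^{+}_D(c_i)}(1-x)^{d^{-}_D(c_i)}$; this gives a completely explicit formula for $P_D$ in $x_1,x_2,n_1,n_2,\epsilon$ (the star being the degenerate case $n_2=0$).

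For a star one computes $P_D(x)=(1+x)^{k}(1-x)^{d-k}+d+(d-2k)x$, where $k$ is the number of arcs leaving the center. Then $P_D(x)-P_D(-x)$ is, up to sign, $(1-x^{2})^{\min(k,d-k)}\big[(1+x)^{|d-2k|}-(1-x)^{|d-2k|}\big]+2(d-2k)x$; comparing top-degree coefficients (which are $\pm$ small integers, hence cannot be killed once the degree exceeds $1$) shows that $P_D$ is even if and only if $d=2k$, i.e.\ exactly when $D\cong -D$. So for stars the characterization follows from Theorem~\ref{lem: polyn} alone, and since Figure~\ref{fig:2} depicts a double star it contributes nothing here. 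For a double star I would run the same degree argument on $P_D(x)-P_D(-x)$: the coefficient of $x$ always vanishes, and peeling off the highest surviving coefficients of the two center-terms $(1+x)^{a_i}(1-x)^{b_i}$ (with $a_i+b_i=n_i+1$) forces, whenever $|n_1-n_2|$ is not small, that each center be \emph{balanced} ($d^{+}_D(c_i)=d^{-}_D(c_i)$), and that the leaf surplus vanish. Handling the few remaining ``small-gap'' cases $|n_1-n_2|\le 1$ (where $S_{2,2}$ lives) by direct inspection, I expect the orientations with $P_D$ even to be exactly: (i) those with $D\cong -D$; (ii) a ``both centers balanced'' family; and (iii) the orientation $D^{*}$ of $S_{2,2}$ with all arcs pointing away from one center, together with $-D^{*}$ (this is the digraph of Figure~\ref{fig:2}).

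It remains to decide which members of families (ii)--(iii) are genuinely converse invariant. When $n_1=n_2$, the ``balanced centers'' condition already forces $D\cong -D$ (swap $c_1\leftrightarrow c_2$), so nothing new appears there. When $n_1\ne n_2$, however, such $D$ has $P_D$ even yet $D\not\cong -D$ (the two centers have different degrees, so an isomorphism would have to fix each center, but then the leaf multiplicities at one center do not match); here I would exhibit a concrete tournament $T$ — for instance a directed triangle dominating a transitive tournament on the remaining vertices — for which $f_T(D)\ne f_T(-D)$, so that these $D$ are \emph{not} converse invariant and thus do not arise under the hypothesis. For family (iii) I would instead prove that $D^{*}$ \emph{is} converse invariant: write the number of copies of $D^{*}$ in an arbitrary tournament $T$ as $\tfrac14\sum_{u\to w}\Psi\!\left(d^{+}_T(u)-1,\ d^{+}_T(w),\ |N^{+}_T(u)\cap N^{+}_T(w)|\right)$ for an explicit polynomial $\Psi$ (counting the ordered placements of the two leaves at each center, with the disjointness corrections), and similarly $f_T(-D^{*})=\tfrac14\sum_{u\to w}\Psi\!\left(d^{-}_T(w)-1,\ d^{-}_T(u),\ |N^{-}_T(u)\cap N^{-}_T(w)|\right)$; then, reparametrizing each arc $u\to w$ by the type-counts of the vertices outside $\{u,w\}$, one verifies the two sums coincide for every $T$. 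Combining the cases yields the stated characterization.

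The main obstacle I anticipate is precisely the last verification: unlike the case $D\cong -D$, showing that $D^{*}$ (Figure~\ref{fig:2}) is converse invariant requires an honest identity between two sums ranging over all arcs of an \emph{arbitrary} tournament, and this identity fails term by term — so the delicate point is to set up the reindexing of the arc set, or an auxiliary pairing of arcs, that makes the global cancellation transparent. A secondary difficulty is making the tournament that kills family (ii) work uniformly in $n_1$ and $n_2$, i.e.\ checking that the chosen $T$ really separates $f_T(D)$ from $f_T(-D)$ for every balanced-centers orientation with $n_1\ne n_2$ rather than just for one small example.
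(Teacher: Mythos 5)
Your reduction to stars and double stars, and your treatment of the star case via the evenness of $P_D$, match the paper (which phrases the star argument through the evaluation $P_D(1)=P_{-D}(1)$, i.e.\ Corollary~\ref{cor:sources and sinks}). However, there are two genuine gaps in the double-star part. First, your claimed classification of the orientations with $P_D$ even --- namely (i) $D\cong -D$, (ii) both centers balanced, (iii) the Figure~\ref{fig:2} digraph --- is wrong. Take $c_1=u$ a source with one out-leaf and the arc $u\to v$, and give $v$ out-degree and in-degree both equal to $n-1$ (so $n_1=1$, $n_2=2n-3$, with $n\ge 3$ arbitrary). For $n=3$ one computes $P_D(x)=(1+x)^2+(1+x)^2(1-x)^2+3(1-x)+(1+x)=6-x^2+x^4$, which is even, yet $u$ is far from balanced, the leaf surplus is $2\neq 0$, and $D\not\cong -D$. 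Your ``peel off the top coefficients'' argument only constrains the center of larger degree (here $v$, which is indeed balanced); it says nothing about the smaller center, whose odd contributions can cancel against the leaves' linear terms. So an entire infinite family escapes your filter and must be killed by an explicit tournament; the paper does exactly this (a transitive tournament with one arc reversed gives $f_T(D)=n(n-1)/2$ versus $f_T(-D)=n(n-1)$). Relatedly, even within your family (ii), the paper's Case~3 works under the weaker hypothesis $d^+(u)+d^+(v)=d^-(u)+d^-(v)$ with $d^+(v)\neq d^-(u)$, again via a transitive tournament with a single reversed arc, so the separating tournament is simpler and more uniform than the ``triangle dominating a transitive tournament'' you propose.

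Second, the step you flag as the main obstacle --- proving that the Figure~\ref{fig:2} digraph $D^*$ is converse invariant --- does not require the global arc-reindexing identity you sketch. The paper gets it essentially for free: $D^*$ is the bridge-mirroring $2(\overrightarrow{K}^0_{1,2})^+_u$ of an oriented path, which is converse invariant by Theorem~\ref{thm:SH}; Observation~\ref{obs:1}/Lemma~\ref{obs:4} show that $f_T(2D^+_u)=f_T(2D)=\sum_{T_1,T_2}f_{T_1}(D)f_{T_2}(D)$ because the two copies of $u$ in the disjoint union $2D$ are non-adjacent and vertex-transitive, and this expression is manifestly invariant under replacing $D$ by $-D$. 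Your direct double-counting plan may well be completable, but as written it is an unverified identity, whereas the recursive-construction lemma disposes of it in two lines. In summary: the necessity direction needs additional tournament constructions beyond the polynomial (your trichotomy is incomplete), and the sufficiency for $D^*$ is left unproven in your write-up.
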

	
	This paper is organized as follows. In Section \ref{sec:2}, we introduce a way to construct infinitely many converse invariant orgraphs that are not isomorphic to their converse. These constructions will be used in the characterization in Section \ref{sec:3}. In Section 3, we begin by proving Theorem \ref{lem: polyn}. Then using Theorem \ref{lem: polyn}, we show that all orientations of regular graphs are not converse invariant if $D$ and $-D$ have different degree sequences. At the end of this section, we prove Theorem \ref{Thm:1}. In Section \ref{sec:4},  we apply our method to characterize all orientations of trees with diameter at most 3 that are converse invariant. An
	open problem is proposed in the subsequent section.\\

	\noindent {\bf Additional Notation.} Given a digraph $D=(V(D),A(D))$. We use $|D|$ to denote the number of vertices and $\|D\|$ the number of arcs. We say that $v$ is a {\it source} if
	$d_D^-(v)=0$ and a {\it sink} if $d_D^+(v)=0$. The {\it degree sequence} of $D$, denote by $\YC{\mathrm{Deg}}(D)$, is $\{(d^+_D(v),d^-_D(v)): v\in V(D)\}$. Let $\mathrm{Deg}^+(D)=\{d^+_D(v): v\in V(D)\}$ and $\mathrm{Deg}^-(D)=\{d^-_D(v): v\in V(D)\}$. The digraph obtained by adding an arc $(u,v)$  to $D$ is denoted by $D+(u,v)$. We say that a pair of vertices $u$ and $v$ is {\it vertex-transitive} if there is an automorphism $\phi$ of $D$ such that $\phi(u)=v$ and $\phi(v)=u$. For any pair of digraphs $D$ and $D'$, we use $\mathrm{ism}(D, D')$ to denote the number of isomorphisms from $D$ to subdigraphs of $D'$, and $\YC{\mathrm{ism}}_{u\to v}(D, D')$ to denote the number of such isomorphisms given that $u\in V(D)$ is mapped to $v\in V(D')$. Let $\YC{\mathrm{aut}}(D)$ denote the number of automorphisms of $D$. A tournament $T$ is {\it transitive} if $(u,v)\in A(T)$ and $(v,w)\in A(T)$ imply that $(u,w)\in A(T)$. \YC{$P=x_1\ldots x_n$ is an {\it $(x_1,x_n)$-dipath} in $D$   if $x_ix_{i+1}\in A(D)$ for all $1\leq i< n$. A digraph $D$ is {\it strong} if there exists a $(u,v)$-dipath for every ordered pair $(u,v)$ of vertices in $D$.} Given a star $K_{1,d}$ with the central vertex $v$, we use $\overrightarrow{K}^i_{1,d}$ to denote the orientation of $K_{1,d}$ with $d^+(v)=i$. Let $G$ be a graph. The {\it distance} between two vertices is defined as the length of the shortest path connecting them. The {\it diameter} of $G$ is the greatest distance between any pair of vertices in $G$.
	
	\section{Recursive constructions for converse invariant orgraphs} \label{sec:2}
	
	The most natural condition for an orgraph $D$ to be converse invariant might be $D\cong-D$. In this section, we give a way to construct infinitely many converse invariant orgraphs $D$ with $D\not\cong -D$.
	
	The following observation is based on the fact that if $u$ and $v$ are vertex-transitive then $D+(u, v)\cong D+(v, u)$ and therefore there is a one-to-one correspondence between copies of $D$ and copies of $D+(u,v)$ in any tournament.
	\begin{observation}
		Let $D$ be an orgraph, and let $u$ and $v$ be a pair of non-adjacent vertices that are vertex-transitive. Then, $f_T(D)=f_T(D+(u,v))$ in any tournament $T$ (See Fig. \ref{fig:1} for an example).
	\end{observation}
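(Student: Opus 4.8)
The plan is to produce, for every tournament $T$, an explicit bijection between the copies of $D$ in $T$ and the copies of $D+(u,v)$ in $T$; since $D$ and $D+(u,v)$ share the same vertex set, such a bijection is precisely the assertion $f_T(D)=f_T(D+(u,v))$. The only structural ingredient is an automorphism $\phi$ of $D$ with $\phi(u)=v$ and $\phi(v)=u$. First I would record the elementary fact that $\phi$, regarded as a permutation of the common vertex set, carries $A(D)$ onto $A(D)$ and the arc $(u,v)$ onto the arc $(v,u)$, hence is an isomorphism from $D+(u,v)$ to $D+(v,u)$; in particular $D+(u,v)\cong D+(v,u)$, and both are orgraphs because $u$ and $v$ are non-adjacent in $D$.

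Next I would set up the forward map. Given a copy $H$ of $D$ in $T$, let $x$ and $y$ be the two vertices of $H$ occupying the roles of $u$ and $v$. Because $T$ is a tournament, exactly one of $(x,y)$, $(y,x)$ is an arc of $T$; call it $e$, and put $H^{+}=H+e$, a subdigraph of $T$ on the same vertex set as $H$. If $e=(x,y)$ then the isomorphism between $H$ and $D$ extends to an isomorphism between $H^{+}$ and $D+(u,v)$; if $e=(y,x)$ it extends to an isomorphism between $H^{+}$ and $D+(v,u)$, which we compose with $\phi$ to again reach $D+(u,v)$. In either case $H^{+}$ is a copy of $D+(u,v)$ in $T$. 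The inverse map goes the other way: a copy $H'$ of $D+(u,v)$ in $T$ contains a distinguished arc, namely the image of $(u,v)$ under an isomorphism of $D+(u,v)$ onto $H'$, and deleting it produces a copy of $D$.

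The step I expect to be the main obstacle is the well-definedness of these two maps, that is, their independence of the auxiliary isomorphism of $H$ with $D$ (respectively of $H'$ with $D+(u,v)$): one must check that all isomorphisms between $H$ and $D$ send $\{u,v\}$ to the same unordered pair of vertices of $H$, and dually that the distinguished arc of $H'$ does not depend on the chosen isomorphism onto $D+(u,v)$. The cleanest way to settle this is probably to replace the bijection by a count: one first shows $\mathrm{ism}(D,T)=2\,\mathrm{ism}(D+(u,v),T)$, by observing that each injective homomorphism $D\to T$ extends in exactly one way, to exactly one of $D+(u,v)$ or $D+(v,u)$, and then using $\phi$ to equate $\mathrm{ism}(D+(u,v),T)$ with $\mathrm{ism}(D+(v,u),T)$. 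Since $f_T(\cdot)=\mathrm{ism}(\cdot,T)/\mathrm{aut}(\cdot)$, it then remains to prove the purely group-theoretic identity $\mathrm{aut}(D)=2\,\mathrm{aut}(D+(u,v))$, and this is exactly where the hypothesis that $u$ and $v$ form a vertex-transitive pair enters: $\phi$ accounts for a factor of $2$, every automorphism of $D$ fixing both $u$ and $v$ is an automorphism of $D+(u,v)$, and one must argue that these automorphisms together with $\phi$ exhaust the automorphisms of $D$.
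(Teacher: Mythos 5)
Your proposal follows the same route the paper intends---the paper's entire justification is the remark preceding the statement, namely that $D+(u,v)\cong D+(v,u)$ and hence there is a one-to-one correspondence between copies of $D$ and copies of $D+(u,v)$---and your reduction of that remark to the two identities $\mathrm{ism}(D,T)=2\,\mathrm{ism}(D+(u,v),T)$ and $\mathrm{aut}(D)=2\,\mathrm{aut}(D+(u,v))$ is exactly the right way to make it precise. The first identity you establish correctly: every injective homomorphism of $D$ into $T$ extends to exactly one of $D+(u,v)$, $D+(v,u)$, and $\phi$ identifies the two counts. The problem is the second identity, which you explicitly leave open (``one must argue that these automorphisms together with $\phi$ exhaust the automorphisms of $D$''). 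That step is not merely unproven; it is false under the stated hypotheses. Take $D=\overrightarrow{K}^3_{1,3}$ with centre $c$ and leaves $\ell_1,\ell_2,\ell_3$, and let $u=\ell_1$, $v=\ell_2$: these are non-adjacent and vertex-transitive, yet $\mathrm{aut}(D)=6$ while $\mathrm{aut}(D+(u,v))=1$, since in $D+(u,v)$ all four vertices have distinct degree pairs. Concretely, in the transitive tournament $T$ on four vertices one finds $f_T(D)=1$ but $f_T(D+(u,v))=3$ (the centre must map to the source of $T$, and any of the three arcs among the remaining vertices can play the role of $(u,v)$). So the well-definedness issue you identified as the ``main obstacle'' is a genuine obstruction: different isomorphisms of a copy $H$ onto $D$ can send $\{u,v\}$ to different pairs, and the map $H\mapsto H^{+}$ is then not well defined.

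To be clear, this gap is inherited from (and in fact exposes a flaw in) the paper's own one-line justification rather than introduced by you; your analysis is more careful than the source. The statement as written needs an additional hypothesis, e.g.\ that every automorphism of $D$ stabilises the set $\{u,v\}$ (equivalently, that $\mathrm{aut}(D)=2\,\mathrm{aut}(D+(u,v))$), which does hold in the instance of Figure~\ref{fig:1} and in the bridge-mirroring applications. The downstream Observation~\ref{obs:2}, which is what the paper actually uses, survives in any case: since $-(D+(u,v))=(-D)+(v,u)$ and $\mathrm{aut}(D+(u,v))=\mathrm{aut}(-(D+(u,v)))$, your identity $\mathrm{ism}(D,T)=2\,\mathrm{ism}(D+(u,v),T)$ applied to both $D$ and $-D$ gives $f_T(D+(u,v))=f_T(-(D+(u,v)))$ with the automorphism factors cancelling. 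If you rewrite your argument at the level of $\mathrm{ism}$ and target Observation~\ref{obs:2} directly, it becomes complete; as a proof of the observation as literally stated, it cannot be completed.
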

	
	\begin{figure}[htbp]
		\centering
		\begin{minipage}{0.45\textwidth}
			\centering
			\begin{tikzpicture}[scale=1.5]
				\tikzstyle{vertexS}=[circle,draw, minimum size=8pt, scale=0.5, inner sep=0.5pt]
				\node (x) at (0,0) [vertexS]{};
				\node (u) at (-1,-1) [vertexS]{};
				\node (v) at (1,-1) [vertexS]{};
				\node (u') at (-2,-2) [vertexS]{};
				\node (v') at (2,-2) [vertexS]{};
				\node [above] at (-1.1,-1) {$u$};
				\node [above] at (1.1,-1) {$v$};
				\draw[thick, arrows={-Stealth[reversed, reversed]}] (u) to (v);
				\draw[thick, arrows={-Stealth[reversed, reversed]}] (x) to (u);
				\draw[thick, arrows={-Stealth[reversed, reversed]}] (u') to (u);
				\draw[thick, arrows={-Stealth[reversed, reversed]}] (x) to (v);
				\draw[thick, arrows={-Stealth[reversed, reversed]}] (v') to (v);
			\end{tikzpicture}
			\centering
			\caption{$D+(u,v)$}
			\label{fig:1}
		\end{minipage}
		\hfill
		\begin{minipage}{0.45\textwidth}
			\centering
			\begin{tikzpicture}[scale=1.5]
				\tikzstyle{vertexS}=[circle,draw, minimum size=8pt, scale=0.5, inner sep=0.5pt]
				\node (x) at (-1,0) [vertexS]{};
				\node (y) at (1,0) [vertexS]{};
				
				\node (u) at (-1,-1) [vertexS]{};
				\node (v) at (1,-1) [vertexS]{};
				\node (u') at (-1,-2) [vertexS]{};
				\node (v') at (1,-2) [vertexS]{};
				\node [left] at (-1.1,-1) {$u$};
				\node [right] at (1.1,-1) {$u$};
				\draw[thick, arrows={-Stealth[reversed, reversed]}] (u) to (v);
				\draw[thick, arrows={-Stealth[reversed, reversed]}] (x) to (u);
				\draw[thick, arrows={-Stealth[reversed, reversed]}] (u') to (u);
				\draw[thick, arrows={-Stealth[reversed, reversed]}] (y) to (v);
				\draw[thick, arrows={-Stealth[reversed, reversed]}] (v') to (v);
			\end{tikzpicture}
			\caption{$2D^+_u$}
			\label{fig:2}
		\end{minipage}
	\end{figure}

	The above observation provides a method to construct a new converse invariant orgraph from the existing ones by recursively adding arcs between vertex-transitive vertices. Therefore, we have the following observation.
	
	\begin{observation}\label{obs:2}
		Let $D$ be a converse invariant orgraph, and let $u$ and $v$ be a pair of non-adjacent vertices that are vertex-transitive. Then, $D+(u,v)$ is also a converse invariant orgraph. 
	\end{observation}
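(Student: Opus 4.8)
The plan is to reduce the claim to the preceding Observation together with the hypothesis that $D$ is converse invariant. Fix an arbitrary tournament $T$; it suffices to establish the chain of equalities
\[
f_T(D+(u,v)) = f_T(D) = f_T(-D) = f_T\big((-D)+(v,u)\big) = f_T\big(-(D+(u,v))\big),
\]
since the two outer terms being equal for every $T$ is exactly the statement that $D+(u,v)$ is converse invariant.

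First I would justify the equalities one by one. The first equality is immediate from the preceding Observation, as $u$ and $v$ are non-adjacent and vertex-transitive in $D$. The second equality is precisely the hypothesis that $D$ is converse invariant. The last equality is the identity $-(D+(u,v)) = (-D)+(v,u)$: reversing every arc of $D+(u,v)$ turns the added arc $(u,v)$ into $(v,u)$ and turns everything else into $-D$.

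The only point needing a short argument is the third equality, $f_T(-D) = f_T\big((-D)+(v,u)\big)$. For this I would check that $u$ and $v$ remain a non-adjacent, vertex-transitive pair in $-D$: non-adjacency is obviously preserved under taking converses, and if $\phi$ is an automorphism of $D$ with $\phi(u)=v$ and $\phi(v)=u$, then the same $\phi$ is an automorphism of $-D$, because $(x,y)\in A(-D)\iff (y,x)\in A(D)\iff (\phi(y),\phi(x))\in A(D)\iff (\phi(x),\phi(y))\in A(-D)$. Applying the preceding Observation to $-D$ with the ordered pair $(v,u)$ then gives the desired equality. Since $T$ was arbitrary, this proves $D+(u,v)$ is converse invariant. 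I do not expect any genuine obstacle here: the substance is contained in the preceding Observation, and the only care required is the one-line verification that vertex-transitivity and non-adjacency pass to the converse digraph.
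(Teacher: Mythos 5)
Your proof is correct and follows exactly the route the paper intends: the paper states this observation without a formal proof, deriving it directly from the preceding observation, and your chain $f_T(D+(u,v))=f_T(D)=f_T(-D)=f_T((-D)+(v,u))=f_T(-(D+(u,v)))$ together with the check that non-adjacency and vertex-transitivity pass to $-D$ is precisely the argument being left implicit.
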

	
	\YC{The following operation was introduced in \cite{ZZ2020}}. For any digraph $D$ and $u\in V(D)$, we can obtain a new digraph $2D^+_{u}$ by taking two copies of $D$ and then adding an arc between the two vertices corresponding to $u$ (see Fig \ref{fig:2} for an instance). And we say that $2D^+_{u}$ is obtained by {\em bridge-mirroring} $D$ at vertex $u$. \YC{In \cite{ZZ2020}, Zhao and Zhou showed} \GG{that every orgraph which has the same density in all tournaments of sufficiently large order} \YC{can only be obtained by starting from an arc and recursively applying bridge-mirroring operation. However, by Obervation \ref{obs:2} we can construct converse invariant orgraphs by applying this operation recursively started from any converse invariant orgraphs.} The following results can be obtained directly by applying the above observations. Note that if $D$ is converse invariant, then the union of arbitrarily many disjoint copies of $D$ is also converse invariant.

	
	\begin{lemma}\label{obs:4}
		Let $D$ be a converse invariant orgraph and $D'$ an orgraph obtained by bridge-mirroring $D$ at a vertex in $D$. Then, $D'$ is also a converse invariant orgraph.
	\end{lemma}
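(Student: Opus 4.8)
The plan is to recognise bridge-mirroring as a special case of the arc-addition operation already analysed in this section, so that the claim follows from Observation~\ref{obs:2} once disjoint unions are handled. No new technique is needed beyond combining two facts that are already available in the excerpt.

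Concretely, I would fix the vertex $u\in V(D)$ at which $D$ is bridge-mirrored, write $2D$ for the disjoint union of two copies of $D$, and let $u_1,u_2$ be the two vertices of $2D$ corresponding to $u$. By definition of the operation, $D'=2D^+_u$ is $2D$ together with one arc between $u_1$ and $u_2$. The first step is to observe that the map swapping the two copies of $D$ (via the identity on $V(D)$) is an automorphism of $2D$ which exchanges $u_1$ and $u_2$; consequently the two orientations of the bridge arc yield isomorphic orgraphs, so we may assume $D'=2D+(u_1,u_2)$, and moreover $u_1,u_2$ — which are non-adjacent, being in different components — form a vertex-transitive pair in $2D$. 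The second step is to note that $2D$ is converse invariant: this is exactly the remark stated just before the lemma, that a disjoint union of converse-invariant orgraphs is converse invariant. The third step is then simply to apply Observation~\ref{obs:2} to the orgraph $2D$ and the non-adjacent vertex-transitive pair $u_1,u_2$, concluding that $2D+(u_1,u_2)=D'$ is converse invariant.

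I do not anticipate a genuine obstacle: once the bridge-mirror is rewritten as an arc addition the argument is a two-liner, and the only delicate points are verifying that $u_1,u_2$ are truly vertex-transitive in $2D$ (immediate from the copy-swap automorphism) and that the direction of the added arc is irrelevant (the same automorphism). Should one prefer not to invoke the disjoint-union remark as a black box, it can be re-derived in passing: the first observation of Section~\ref{sec:2} already gives $f_T(D')=f_T(2D)$ and $f_T(-D')=f_T(2(-D))$ for every tournament $T$, so everything reduces to $f_T(2D)=f_T(2(-D))$; and this holds because an inclusion–exclusion over the overlap pattern of two embedded copies of $D$ writes $\mathrm{ism}(2D,T)$ as a polynomial, with coefficients depending only on $T$, in the quantities $\mathrm{ism}(D,T[S])=f_{T[S]}(D)\,\mathrm{aut}(D)$ for $S\subseteq V(T)$, each of which is unchanged on replacing $D$ by $-D$ since $f_{T[S]}(D)=f_{T[S]}(-D)$ and $\mathrm{aut}(D)=\mathrm{aut}(-D)$, whence $f_T(2D)\,\mathrm{aut}(2D)=f_T(2(-D))\,\mathrm{aut}(2(-D))$ and the two automorphism counts agree.
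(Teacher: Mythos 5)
Your proposal is correct and follows essentially the same route as the paper: the paper's displayed chain $f_T(D')=f_T(2D)=\dots=f_T(2(-D))=f_T(-D')$ is precisely your combination of the copy-swap vertex-transitivity of $u_1,u_2$ (to drop or reinsert the bridge arc) with the converse invariance of the disjoint union $2D$. Your optional re-derivation of the disjoint-union step corresponds to the paper's explicit factorization of $f_T(2D)$ over pairs of disjoint subtournaments.
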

	\begin{proof}
		Let $2D$ (resp. $2(-D)$) denote the  union of two disjoint copies of $D$ (resp. $-D$). Since $D$ is converse invariant, then we have the following holds for any tournament $T$, 
		
		$$f_T(D')=f_T(2D)=\sum_{T_1,T_2} f_{T_1}(D)\cdot f_{T_2}(D)=\sum_{T_1,T_2} f_{T_1}(-D)\cdot f_{T_2}(-D)=f_T(2(-D))=f_T(-D'),$$ 
		where $T_1$ and $T_2$ are taken over all pairs of disjoint subtournaments of $T$ with order $|D|$. Hence, $D'$ is a converse invariant orgraph.
	\end{proof}
	It is natural to ask whether there is an integer $k$ such that every orgraph $D$ with $D\not\cong -D$ and maximum degree at least $k$  is not  converse invariant. The following simple corollary of Lemma \ref{obs:4} shows that such $k$ does not exist.
	\begin{corollary}
		Let $k\geq 3$ be any integer, then there is an orgraph $D$ with maximum degree $k$ such that $D\not\cong -D$ and $f_T(D)=f_T(-D)$. 
	\end{corollary}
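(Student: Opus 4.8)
The plan is to build the required orgraph by iterating the bridge-mirroring operation of Lemma~\ref{obs:4}, starting from a small converse invariant orgraph that is already not isomorphic to its converse. Bridge-mirroring does double duty here: by Lemma~\ref{obs:4} it preserves converse invariance, and, performed at a vertex of current maximum degree, it raises the maximum degree by exactly one, so after the right number of steps we reach maximum degree exactly $k$.

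First I would fix the base orgraph $D_0$ to be the oriented path on four vertices $p_1,p_2,p_3,p_4$ with arc set $\{(p_1,p_2),(p_2,p_3),(p_4,p_3)\}$. Since $\Delta(D_0)=2$, Theorem~\ref{thm:SH} gives that $D_0$ is converse invariant, so $f_T(D_0)=f_T(-D_0)$ for every tournament $T$. On the other hand $D_0$ has two sources ($p_1$ and $p_4$) and one sink ($p_3$), whereas $-D_0$ has one source and two sinks; as an isomorphism preserves the number of sources, $D_0\not\cong -D_0$. The feature of $D_0$ that I will exploit is that $p_2$ is a vertex of maximum degree that is neither a source nor a sink.

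Next, given $k\ge 3$, I would form a chain $D_0,D_1,\dots,D_{k-2}=:D$, where $D_{j+1}$ is obtained from $D_j$ by bridge-mirroring at a maximum-degree vertex $u_j$ of $D_j$ that is neither a source nor a sink. The inductive claim to check is that such $u_j$ always exists and that $\Delta(D_j)=2+j$: in $D_{j+1}$ every vertex other than the two copies of $u_j$ keeps its $D_j$-degree, while each of the two copies of $u_j$ gains exactly one incident arc and hence becomes a vertex of degree $\Delta(D_j)+1$ with both in- and out-degree still positive; so these two copies are the unique maximum-degree vertices of $D_{j+1}$, and either may serve as $u_{j+1}$. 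Thus $\Delta(D)=2+(k-2)=k$, and by Lemma~\ref{obs:4} every $D_j$, in particular $D$, is converse invariant, so $f_T(D)=f_T(-D)$ for all $T$.

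The one step that genuinely needs care---the main, if mild, obstacle---is to rule out that some bridge-mirroring accidentally produces a graph isomorphic to its own converse. For this I would track the numbers of sources and sinks. Bridge-mirroring $D_j$ at the non-source, non-sink vertex $u_j$ adds a single arc joining two disjoint copies of $D_j$; since $u_j$ is not a source of $D_j$, neither copy of $u_j$ is a source of $D_{j+1}$ and no other vertex changes its in-degree, so the number of sources exactly doubles, and symmetrically so does the number of sinks. Hence $D$ has $2^{k-1}$ sources and $2^{k-2}$ sinks. Since taking the converse interchanges sources and sinks and $2^{k-1}\ne 2^{k-2}$, we get $D\not\cong -D$; together with $f_T(D)=f_T(-D)$ for every $T$ and $\Delta(D)=k$, this completes the proof.
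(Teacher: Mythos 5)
Your proof is correct and follows exactly the route the paper intends: the corollary is stated as a direct consequence of Lemma~\ref{obs:4}, i.e.\ iterated bridge-mirroring starting from a converse invariant orientation of a path, which is what you do. Your added care in choosing a base path with $D_0\not\cong -D_0$ and in tracking the source/sink counts to certify $D\not\cong -D$ at every stage fills in the details the paper leaves implicit, and it checks out.
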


	\section{A Polynomial of Degree Sequences}\label{sec:3}
	
	We start this section from proving Theorem \ref{lem: polyn}. 	
	
	\vspace{2mm}	
	\noindent{\bf Theorem}$\mbox{ }${\bf \ref{lem: polyn}.}
	{\em		Let $D$ be a converse invariant orgraph. Then $P_D(x)=P_{-D}(x)$.}
	\begin{proof}
		Let $T$ be a random tournament on $|D|-1$ vertices where each direction of the arc between any pair of vertices is taken independently and uniformly with probability $1/2$. Let $p$ be an arbitrary real in the interval $[-1/2, 1/2]$. Obtain a new random tournament from $T$ by adding a new vertex $v^*$, and for any vertex $v\in V(T)$, we add the arc $v^*v$ with probability $1/2+p$ and $vv^*$ with probability $1/2-p$. Denote the resulting random tournament by $T_p$.  Thus,
		
		\begin{eqnarray}\label{Ism}
			\mathbb{E}(\mathrm{ism}(D,T_p))&=&\sum_{u\in V(D)} \mathbb{E} (\mathrm{ism}_{u\to v^*}(D,T_p))\notag\\
			&=& \sum_{u\in V(D)}  (|D|-1)! 2^{-\|D\|+d^-(u)+d^+(u)}(1/2+p)^{d^+(u)}(1/2-p)^{d^-(u)}\notag\\
			&=&  (|D|-1)! 2^{-\|D\|}\sum_{u\in V(D)} (1+2p)^{d^+(u)}(1-2p)^{d^-(u)}.
		\end{eqnarray}
		
		Observe that $\mathrm{aut}(D)=\mathrm{aut}(-D)$. Thus, for any $p\in[-1/2,1/2]$ and $T^*\in T_p$, we have $\mathrm{ism}(D,T^*)= \mathrm{ism}(-D,T^*)$ because $f_{T^*}(D)=f_{T^*}(-D)$ and $f_{T^*}(D)=\frac{\mathrm{ism}(D,T^*)}{\mathrm{aut}(D)}$. Therefore, $\mathbb{E}(\mathrm{ism}(D,T_p))= \mathbb{E}(\mathrm{ism}(-D,T_p))$ for any $p\in[-1/2,1/2]$.  
		
		Then by (\ref{Ism}), for any $p\in [-1/2, 1/2]$ we have
		\begin{eqnarray}\label{infi}		
			\sum_{u\in V(D)} (1+2p)^{d^+(u)}(1-2p)^{d^-(u)}=\sum_{u\in V(D)} (1+2p)^{d^-(u)}(1-2p)^{d^+(u)}.
		\end{eqnarray}
		As (\ref{infi}) holds for infinitely many $p$, $P_D(x)=P_{-D}(x)$.\qed
	\end{proof}
	
	The following corollary can be obtained from  Theorem \ref{lem: polyn} by taking $x=1$.
	\begin{corollary}\label{cor:sources and sinks}
		Let $D$ be an orgraph, $S_1$ the set of its sources, and $S_2$ the set of its sinks. If $D$ is converse invariant, then \[\sum_{v\in S_1}2^{d^+(v)}=\sum_{v\in S_2}2^{d^-(v)}.\]
	\end{corollary}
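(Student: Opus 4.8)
The plan is to simply invoke Theorem \ref{lem: polyn} and evaluate the resulting polynomial identity at the point $x=1$. Since $D$ is converse invariant, Theorem \ref{lem: polyn} gives $P_D(x)=P_{-D}(x)$ as polynomials in $x$, hence as functions of a real variable; in particular $P_D(1)=P_{-D}(1)$. The whole argument then reduces to identifying what each side becomes at $x=1$.

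First I would compute $P_D(1)$. For a fixed $u\in V(D)$ the summand $(1+x)^{d^+_D(u)}(1-x)^{d^-_D(u)}$ is a polynomial divisible by $(1-x)$ whenever $d^-_D(u)\geq 1$, so it vanishes at $x=1$; if instead $d^-_D(u)=0$, i.e.\ $u\in S_1$, the summand equals $(1+x)^{d^+_D(u)}$ and evaluates to $2^{d^+_D(u)}$. Hence $P_D(1)=\sum_{v\in S_1}2^{d^+_D(v)}$. Next I would treat $P_{-D}(1)$ the same way, using $d^+_{-D}(u)=d^-_D(u)$ and $d^-_{-D}(u)=d^+_D(u)$: a vertex $u$ contributes to $P_{-D}(1)$ exactly when $d^-_{-D}(u)=d^+_D(u)=0$, that is, when $u\in S_2$, and its contribution is then $2^{d^-_{-D}(u)}=2^{d^-_D(u)}$. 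Thus $P_{-D}(1)=\sum_{v\in S_2}2^{d^-_D(v)}$. Combining with $P_D(1)=P_{-D}(1)$ yields the claimed identity.

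There is essentially no obstacle here beyond bookkeeping: one must check that substituting $x=1$ in $(1-x)^{d^-(u)}$ kills precisely the non-source terms (and, after passing to the converse, precisely the non-sink terms), and one must translate in- and out-degrees correctly between $D$ and $-D$. Working with the explicit factor $(1-x)$ rather than the convention $0^0=1$ makes this translation transparent. As a sanity check I would note that an isolated vertex of $D$ lies in both $S_1$ and $S_2$ with $d^+=d^-=0$, contributing $2^0=1$ to each side, which is consistent.
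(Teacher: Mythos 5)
Your proposal is correct and is exactly the paper's argument: the paper derives this corollary from Theorem \ref{lem: polyn} by setting $x=1$, which kills every summand with $d^-(u)\ge 1$ in $P_D$ (leaving $\sum_{v\in S_1}2^{d^+(v)}$) and every summand with $d^+(u)\ge 1$ in $P_{-D}$ (leaving $\sum_{v\in S_2}2^{d^-(v)}$). Your bookkeeping of the degree swap between $D$ and $-D$ is accurate, so nothing is missing.
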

	A polynomial  
	$f(x)$ is  {\it even}  if  $f(x)=f(-x)$ for all $x$ in the domain of $f$. Let $c_t(D)$ denote the coefficient  of term $x^t$ in $P_D(x)$. Then 
	
	\[c_t(D)=\sum\limits_{v\in V(D)}\sum\limits_{i=0}^{t} {d^+(v) \choose i} {d^-(v)\choose t-i} (-1)^{t-i}.\]
	
	Let $\Delta_o(D)$ the maximum odd number such that $\Delta_o(D)\leq \Delta(D)$. We have the following result. 
	\begin{theorem}\label{thm:1}
		Let $D$ be an orgraph. Then $P_D(x)=P_{-D}(x)$ if and only if $c_t(D)=0$ for any odd number $t\leq \Delta_o(D)$.
	\end{theorem}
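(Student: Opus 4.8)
The plan is to notice that $P_{-D}$ is nothing but $P_D$ with $x$ replaced by $-x$, so the asserted identity is exactly the statement that $P_D$ is an even polynomial, and then to translate ``even'' into the vanishing of the odd-degree coefficients up to $\Delta_o(D)$.

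First I would record the elementary identity $P_{-D}(x)=P_D(-x)$. Indeed, in $-D$ each vertex $u$ has out-degree $d^-_D(u)$ and in-degree $d^+_D(u)$, so
\[
P_{-D}(x)=\sum_{u\in V(D)}(1+x)^{d^-_D(u)}(1-x)^{d^+_D(u)}=\sum_{u\in V(D)}\bigl(1-(-x)\bigr)^{d^+_D(u)}\bigl(1+(-x)\bigr)^{d^-_D(u)}=P_D(-x).
\]
Hence $P_D(x)=P_{-D}(x)$ as polynomials if and only if $P_D(x)=P_D(-x)$, i.e.\ if and only if $P_D$ is even, i.e.\ if and only if $c_t(D)=0$ for every odd $t$.

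Next I would bound the degree of $P_D$: each summand $(1+x)^{d^+(u)}(1-x)^{d^-(u)}$ has degree $d^+(u)+d^-(u)=d_D(u)\le\Delta(D)$, so $\deg P_D\le\Delta(D)$ and therefore $c_t(D)=0$ for all $t>\Delta(D)$. Consequently the condition ``$c_t(D)=0$ for every odd $t$'' is equivalent to ``$c_t(D)=0$ for every odd $t$ with $t\le\Delta(D)$'', and since $\Delta_o(D)$ is by definition the largest odd integer not exceeding $\Delta(D)$, the odd integers $\le\Delta(D)$ are precisely the odd integers $\le\Delta_o(D)$. Chaining the two equivalences yields the theorem. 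The explicit formula for $c_t(D)$ given just before the statement (obtained by expanding each term with the binomial theorem) is not needed for the logic, but can be cited to make the coefficients concrete.

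I do not expect a real obstacle here. The only two points needing a moment's care are the sign bookkeeping establishing $P_{-D}(x)=P_D(-x)$, and the remark that $\deg P_D\le\Delta(D)$ is what allows the range of $t$ to be truncated from ``all odd $t$'' down to ``odd $t\le\Delta_o(D)$''; everything else is immediate.
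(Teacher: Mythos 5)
Your proposal is correct and follows essentially the same route as the paper: the paper's proof also reduces the identity $P_D(x)=P_{-D}(x)$ to the evenness of $P_D$ and then to the vanishing of the odd coefficients up to $\Delta_o(D)$. Your write-up merely makes explicit the two steps the paper leaves implicit, namely the identity $P_{-D}(x)=P_D(-x)$ and the degree bound $\deg P_D\le\Delta(D)$ that justifies truncating the range of $t$.
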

	\begin{proof}
		Observe that $P_D(x)=P_{-D}(x)$ if and only if the polynomial $P_D(x)$ is even. Since $P_D(x)$ is even if and only if the  coefficient $c_t(D)$ of term $x^t$ is zero for any odd number $t\leq \Delta_o(D)$, we are done.
	\end{proof}
	
	Note that $c_1(D)=0$ holds for all digraphs since $\sum_{v\in V(D)} d^+(v)=\sum_{v\in V(D)} d^-(v)$. And $c_3(D)=0$ if and only if 
	\begin{equation}\label{cod:c3}
		\sum_{v\in V(D)}(d^+(v)-d^-(v))^3=3\sum_{v\in V(D)} (d^+(v)^2-d^-(v)^2).
	\end{equation}

	By Theorem \ref{lem: polyn} and Theorem \ref{thm:1}, we have the following corollary. 
	
	\begin{corollary}
		Let $D$ be a converse invariant orgraph. If $\Delta(D)$ is even, then $\Delta_o(D)=\Delta(D)-1$, and
	\YCb{	\begin{equation}\label{cod:cd}
				c_{\Delta_o(D)}(D)=\sum_{v: d(v)=\Delta(D)} ((-1)^{d^-(v)}d^+(v)+(-1)^{d^-(v)-1}d^-(v))+\sum_{v: d(v)=\Delta_o(D)} (-1)^{d^-(v)}=0.
		\end{equation}}
		If $\Delta(D)$ is odd, then $\Delta_o(D)=\Delta(D)$, and
		\YCb{\begin{equation}\label{cod:cd2} 
				c_{\Delta_o(D)}(D)=\sum_{v:d(v)=\Delta_o(D)} (-1)^{d^-(v)}=0.
		\end{equation}}
	\end{corollary}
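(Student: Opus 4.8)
The plan is simply to feed the hypothesis through the two results just established and then read off one coefficient of $P_D(x)$ by hand.

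First, since $D$ is converse invariant, Theorem~\ref{lem: polyn} gives $P_D(x)=P_{-D}(x)$, and Theorem~\ref{thm:1} then tells us that $c_t(D)=0$ for every odd $t\le\Delta_o(D)$; as $\Delta_o(D)$ is odd by definition, in particular $c_{\Delta_o(D)}(D)=0$. The identities $\Delta_o(D)=\Delta(D)-1$ (when $\Delta(D)$ is even) and $\Delta_o(D)=\Delta(D)$ (when $\Delta(D)$ is odd) are immediate from the definition of $\Delta_o$. So everything reduces to computing $c_{\Delta_o(D)}(D)$ explicitly.

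For that, I would use that in $P_D(x)=\sum_{v\in V(D)}(1+x)^{d^+(v)}(1-x)^{d^-(v)}$ each summand is a polynomial of degree exactly $d(v)$, so only vertices with $d(v)\ge\Delta_o(D)$ contribute to $c_{\Delta_o(D)}(D)$. The one elementary fact needed is that for nonnegative integers $a,b$ with $a+b=n$, the polynomial $(1+x)^a(1-x)^b$ has $x^n$-coefficient $(-1)^b$ and $x^{n-1}$-coefficient $(-1)^b(a-b)$ --- the latter because the only index pairs $(i,j)$ with $i\le a$, $j\le b$, $i+j=n-1$ are $(a,b-1)$ and $(a-1,b)$, contributing $b(-1)^{b-1}+a(-1)^b$. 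In the case $\Delta(D)$ even, the vertices with $d(v)=\Delta(D)=\Delta_o(D)+1$ each contribute their $x^{\Delta_o(D)}$-coefficient $(-1)^{d^-(v)}(d^+(v)-d^-(v))=(-1)^{d^-(v)}d^+(v)+(-1)^{d^-(v)-1}d^-(v)$, and the vertices with $d(v)=\Delta_o(D)$ each contribute their leading coefficient $(-1)^{d^-(v)}$, which is exactly formula~\eqref{cod:cd}; in the case $\Delta(D)$ odd, only the vertices with $d(v)=\Delta_o(D)$ contribute, each via its leading coefficient $(-1)^{d^-(v)}$, giving~\eqref{cod:cd2}. Combining each expression with $c_{\Delta_o(D)}(D)=0$ finishes the proof.

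There is no real obstacle here; the only points requiring a little care are confirming that vertices of degree strictly less than $\Delta_o(D)$ cannot contribute to $c_{\Delta_o(D)}(D)$ (which is just the degree bound on each summand of $P_D(x)$), and, in the even case, extracting the correct second-from-top coefficient of $(1+x)^a(1-x)^b$ rather than only its leading coefficient.
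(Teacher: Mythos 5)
Your proof is correct and follows exactly the route the paper intends: the paper states this corollary as an immediate consequence of Theorems~\ref{lem: polyn} and~\ref{thm:1} without writing out the coefficient extraction, and your computation of the top two coefficients of $(1+x)^{d^+(v)}(1-x)^{d^-(v)}$ is precisely the omitted calculation.
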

	
	By (\ref{cod:cd2}), the following holds.
	
	\begin{corollary}\label{cor3.6}
		Let $G$ be a graph with $\Delta(G)\geq 3$. If $\Delta(G)$ is odd and $G$ \YCb{has an odd number of vertices} with the maximum degree, then for any oriented $D$ of $G$, there exists a tournament $T$ such that $f_T(D)\neq f_T(-D)$. 
	\end{corollary}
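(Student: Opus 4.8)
The plan is to prove the contrapositive using Theorem \ref{lem: polyn} together with the coefficient identity (\ref{cod:cd2}) established just above. Fix an arbitrary orientation $D$ of $G$ and suppose, for contradiction, that $D$ is converse invariant, i.e.\ $f_T(D)=f_T(-D)$ for every tournament $T$. Then Theorem \ref{lem: polyn} gives $P_D(x)=P_{-D}(x)$.

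Since $\Delta(D)=\Delta(G)$ is odd and at least $3$, we have $\Delta_o(D)=\Delta(D)=\Delta(G)$, so equation (\ref{cod:cd2}) applies and yields
\[
c_{\Delta_o(D)}(D)=\sum_{v:\,d(v)=\Delta_o(D)}(-1)^{d^-(v)}=0.
\]
The right-hand sum has exactly one term, equal to $+1$ or $-1$, for each vertex $v$ with $d_G(v)=\Delta(G)$. By hypothesis the number of such vertices is odd, so this is a sum of an odd number of $\pm 1$'s and is therefore an odd integer; in particular it is nonzero. This contradicts the displayed equality, so $D$ cannot be converse invariant, and hence some tournament $T$ satisfies $f_T(D)\neq f_T(-D)$.

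All the substantive work — passing from converse invariance to the polynomial identity $P_D=P_{-D}$, and extracting the top coefficient identity (\ref{cod:cd2}) — is already carried out in the results preceding the statement, so the only remaining ingredient is this parity observation. There is essentially no obstacle beyond bookkeeping; the one point worth flagging is that the hypothesis $\Delta(G)\ge 3$ is what guarantees $\Delta_o(D)\ge 3>1$, ensuring that (\ref{cod:cd2}) carries genuine information rather than collapsing to the always-true identity $c_1(D)=0$.
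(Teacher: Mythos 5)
Your argument is correct and is exactly the paper's intended proof: the paper derives Corollary \ref{cor3.6} directly from (\ref{cod:cd2}) via the same parity observation that a sum of an odd number of $\pm 1$'s cannot vanish. No further comment is needed.
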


	We can also easily show the following result about the orientations of regular graphs by using Theorem \ref{lem: polyn}.
	
	\begin{theorem}
		Let $d$ be a positive integer and $D$ an orientation of a $d$-regular graph. If $\mathrm{Deg}(D)\neq \mathrm{Deg}(-D)$, then there exists a tournament $T$ such that $f_T(D)\neq f_T(-D)$.
	\end{theorem}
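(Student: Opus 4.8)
The plan is to use Theorem~\ref{lem: polyn} contrapositively: I will show that, for an orientation $D$ of a $d$-regular graph, the assumption $\mathrm{Deg}(D)\neq\mathrm{Deg}(-D)$ forces $P_D(x)\neq P_{-D}(x)$, so $D$ cannot be converse invariant and some tournament $T$ must witness $f_T(D)\neq f_T(-D)$.

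First I would exploit $d$-regularity. Every $v\in V(D)$ satisfies $d^+_D(v)+d^-_D(v)=d$, so $d^-_D(v)=d-d^+_D(v)$ is determined by $d^+_D(v)$. Setting $n_i:=|\{v\in V(D):d^+_D(v)=i\}|$ for $0\le i\le d$, one obtains
\[
P_D(x)=\sum_{i=0}^{d} n_i\,(1+x)^i(1-x)^{d-i}
\qquad\text{and}\qquad
P_{-D}(x)=\sum_{i=0}^{d} n_{d-i}\,(1+x)^i(1-x)^{d-i},
\]
the second identity holding because passing to $-D$ swaps the pair $(d^+,d^-)$ at each vertex, after which one re-indexes $i\mapsto d-i$. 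In the same notation $\mathrm{Deg}(D)$ is the multiset containing $n_i$ copies of $(i,d-i)$, while $\mathrm{Deg}(-D)$ contains $n_{d-i}$ copies of $(i,d-i)$; hence $\mathrm{Deg}(D)\neq\mathrm{Deg}(-D)$ precisely when $n_i\neq n_{d-i}$ for some $i$ (and this conclusion also holds under the weaker, set-valued reading of $\mathrm{Deg}$).

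The key step is that the $d+1$ polynomials $(1+x)^i(1-x)^{d-i}$, $i=0,\dots,d$, are linearly independent over $\mathbb{R}$: under the substitution $y=\tfrac{1+x}{1-x}$ one has $(1+x)^i(1-x)^{d-i}=(1-x)^d\,y^i$, so a vanishing linear combination becomes a polynomial in $y$ with infinitely many roots, forcing all coefficients to vanish. Consequently, whenever some $n_i\neq n_{d-i}$,
\[
P_D(x)-P_{-D}(x)=\sum_{i=0}^{d}(n_i-n_{d-i})\,(1+x)^i(1-x)^{d-i}
\]
is a nonzero polynomial, i.e. $P_D(x)\neq P_{-D}(x)$, and Theorem~\ref{lem: polyn} then shows $D$ is not converse invariant. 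I do not expect a real obstacle here: the only point requiring a small argument is the linear-independence claim, for which the Möbius substitution above is the cleanest route, together with the bookkeeping that identifies $\mathrm{Deg}(D)\neq\mathrm{Deg}(-D)$ with the existence of an index $i$ with $n_i\neq n_{d-i}$.
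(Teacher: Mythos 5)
Your proof is correct and follows essentially the same route as the paper: both exploit $d$-regularity to factor out $(1-x)^d$, use the substitution $y=\frac{1+x}{1-x}$ to reduce $P_D(x)=P_{-D}(x)$ to an identity of polynomials in $y$ (equivalently, your linear-independence claim), and then translate equality of out-degree counts $n_i=n_{d-i}$ back into $\mathrm{Deg}(D)=\mathrm{Deg}(-D)$. The only difference is cosmetic: you argue contrapositively where the paper argues by contradiction.
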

	
	\begin{proof}
		Suppose to the contrary that $f_T(D)=f_T(-D)$ for every tournament $T$. Then by Theorem \ref{lem: polyn}, when $x\neq 1$ we have
		\[\sum_{u\in V(D)} \left(\frac{1+x}{1-x}\right)^{d^+(u)}=\frac{P_D(x)}{(1-x)^{d(u)}}=\frac{P_{-D}(x)}{(1-x)^{d(u)}}=\sum_{u\in V(D)}\left(\frac{1+x}{1-x}\right)^{d^-(u)},\]
		which implies the two polynomials $\sum_{u\in V(D)} y^{d^+(u)}$ and $\sum_{u\in V(D)} y^{d^-(u)}$ are the same. Hence, $\mathrm{Deg}^+(D)=\mathrm{Deg}^-(D)$. Let $a^D_{i,d-i}$ denote the number of vertices with out-degree $i$ and in-degree $d-i$ in $D$. Note that $a^D_{i,d-i}=|\{v: d^+(v)=i\}|$ and $a^{-D}_{i,d-i}=a^D_{d-i,i}=|\{v: d^-(v)=i\}|$. As $\mathrm{Deg}^+(D)=\mathrm{Deg}^-(D)$,  we have that $a^D_{i,d-i}=a^{-D}_{i,d-i}$ for all $i\in [d]$ and so $\mathrm{Deg}(D)=\mathrm{Deg}(-D)$, a contradiction. 
	\end{proof}
	
	To end this section, we now give the proof of Theorem \ref{Thm:1}.
	
	\noindent{\it Proof of Theorem \ref{Thm:1}.}
	We first assume that $G$ is a tree. Then we orient it from any leaf $u$ such that apart from $u$, every other vertex has in-degree exactly one.  The
	resulting orgraph is denoted by $D$. Then, we have one source $u$ with out-degree one and at least two sinks with in-degree one in $D$ since $\Delta(G)\geq 3$. Let $S_1$ be the set of sources in $D$ and $S_2$ the set of its sinks. As 
	\[\sum_{v\in S_1}2^{d^+(v)}=2<2+2\leq \sum_{v\in S_2}2^{d^-(v)},\]
	we are done by applying Corollary \ref{cor:sources and sinks}.
	
	We now assume that $G$ contains cycles. Let $C$ be one of the shortest cycles in $G$. Assume without loss of generality that $V(G)=\{v_1,v_2,\dots, v_n\}$ and $C=v_1v_2\dots v_gv_1$. Note that $g<n$ since $\Delta(G)\geq 3$ and $C$ is a shortest cycle. We obtain an orientation $D$ of $G$ by setting $(v_g, v_1)\in A(D)$ and $(v_i, v_j)\in A(D)$ if $i<j$ and $v_iv_j\in E(G)\setminus\{v_1v_g\}$. Let $T$ be a tournament obtained from a strong tournament $T_0$ with  $|T_0|=g$ and a transitive tournament $T_1$ with  $|T_1|=n-g$ by adding all arcs from $T_0$ to $T_1$. One can observe that $f_T(D)>0$ but $f_T(-D)=0$, which completes the proof.\qed

	\section{Trees with diameter at most three }\label{sec:4}
	
	In this section, we characterize all orientations of trees with diameter at most three that are converse invariant.  In \cite{SH2023}, Sahili and Hanna showed that $\overrightarrow{K}^d_{1,d}$ with $d\geq 3$
	is not converse invariant by constructing a special tournament. The following corollary generalizes this result avoiding an explicit construction.
	
	\begin{corollary}\label{star}
		Let $D$ be an orientation of star $K_{1,d}$ with $d\geq 3$. Then $D$ is converse invariant if and only if $D\cong -D$. 
	\end{corollary}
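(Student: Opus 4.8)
The plan is to apply Theorem~\ref{lem: polyn} (or its specializations) to the orientation $D$ of $K_{1,d}$, exploiting the fact that $K_{1,d}$ has exactly one vertex of degree $d\ge 3$ and $d$ leaves. Write the central vertex as $v$ with $d^+_D(v)=i$ and $d^-_D(v)=d-i$; then $D\cong \overrightarrow{K}^i_{1,d}$ in the notation of the excerpt, and $-D\cong \overrightarrow{K}^{d-i}_{1,d}$. Since $\overrightarrow{K}^i_{1,d}\cong \overrightarrow{K}^{d-i}_{1,d}$ precisely when $i=d-i$, the condition $D\cong -D$ is equivalent to $d$ being even and $i=d/2$. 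So the statement reduces to: if $i\ne d-i$ (equivalently $2i\ne d$), then $D$ is not converse invariant. The forward direction of the "if and only if" — that $D\cong -D$ implies converse invariance — is immediate, since isomorphic digraphs are counted equally in every tournament.

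For the contrapositive of the other direction I would compute the polynomial $P_D(x)$ explicitly. The central vertex contributes $(1+x)^i(1-x)^{d-i}$; each of the $i$ out-leaves of $v$ is a sink with in-degree $1$, contributing $(1-x)$; each of the $d-i$ in-leaves is a source with out-degree $1$, contributing $(1+x)$. Hence
\[
P_D(x)=(1+x)^i(1-x)^{d-i}+i(1-x)+(d-i)(1+x),
\]
and similarly $P_{-D}(x)=(1+x)^{d-i}(1-x)^i+(d-i)(1-x)+i(1+x)$. By Theorem~\ref{lem: polyn}, if $D$ were converse invariant we would need $P_D(x)\equiv P_{-D}(x)$, i.e.
\[
(1+x)^i(1-x)^{d-i}-(1+x)^{d-i}(1-x)^i=(d-2i)\bigl((1+x)-(1-x)\bigr)=2(d-2i)x.
\]
The cleanest way to finish is to read off the coefficient of $x^3$ on both sides: the right-hand side has no $x^3$ term, while on the left the $x^3$ coefficient can be evaluated via the binomial expansions (or, more slickly, via Theorem~\ref{thm:1} and the formula for $c_3$ in~(\ref{cod:c3})). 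Alternatively, and perhaps most transparently, I would substitute $x=1$: then $P_D(1)=2^i\cdot 0^{\,d-i}+2i$ and $P_{-D}(1)=2^{d-i}\cdot 0^{\,i}+2(d-i)$. Since $d\ge 3$ forces $\min(i,d-i)\ge 1$ whenever... — actually one must be slightly careful when $i=0$ or $i=d$, so I would split into the case $0<i<d$ (where both $0$-powers vanish, giving $2i\ne 2(d-i)$ directly from $2i\ne d$) and the case $i\in\{0,d\}$ (where one evaluates to $2^d>2d$ for $d\ge 3$ and the other to $2d$, again unequal). In every case $P_D(1)\ne P_{-D}(1)$, contradicting Theorem~\ref{lem: polyn}; hence by Corollary~\ref{cor:sources and sinks} equivalently, some tournament $T$ has $f_T(D)\ne f_T(-D)$.

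The only genuine obstacle is bookkeeping: making sure the boundary cases $i=0$ and $i=d$ (where $0^0$ does not arise but $0^{\,d-i}$ with $d-i\ge 1$ does) are handled correctly, and checking that $2^d>2d$ for all $d\ge 3$ — a trivial induction. Everything else is a direct substitution into the polynomial identity supplied by Theorem~\ref{lem: polyn}, so I expect the proof to be only a few lines once the polynomial $P_D(x)$ is written down. If one prefers a uniform argument avoiding the case split, Corollary~\ref{cor:sources and sinks} already gives it: the sinks of $D$ are the $i$ out-leaves together with $v$ itself when $i=d$, and the sources are the $d-i$ in-leaves together with $v$ when $i=0$; comparing $\sum_{v\in S_1}2^{d^+(v)}$ with $\sum_{v\in S_2}2^{d^-(v)}$ yields the same inequality $2i\ne 2(d-i)$ (resp.\ $2d\ne 2^d$), so $D$ cannot be converse invariant unless $2i=d$.
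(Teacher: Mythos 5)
Your proposal is correct and takes essentially the same route as the paper: the paper's proof is precisely the $x=1$ specialization of the polynomial (Corollary~\ref{cor:sources and sinks}) applied to the centre and the leaves, giving $2d^-(v)=2d^+(v)$ when the centre is neither a source nor a sink and the contradiction $2^d=2d$ otherwise. Two harmless bookkeeping slips do not affect the (symmetric) conclusion: $P_D(1)=2^i\cdot 0^{\,d-i}+2(d-i)$ rather than $+2i$, and the centre is a sink when $i=0$ and a source when $i=d$, not the other way around.
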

	\begin{proof}
		Let $v$ be  the central vertex of $K_{1,d}$. If $D$ is converse invariant, then by Corollary \ref{cor:sources and sinks},  $d^+(v)=d^-(v)$ and therefore $D\cong -D$ when $v$ is not a sink or source. And $v$ cannot be a source or sink since otherwise, by Corollary \ref{cor:sources and sinks}, we have $2^d=2d$ which contradicts the fact that $d\geq 3$. 
	\end{proof}
	
	We now characterize all orientations of trees with diameter $3$ that are converse invariant.
	We also call a tree with diameter $3$ a {\em double star} and the {\em central vertices} of a double star are the vertices with degree at least $2$.
	\begin{theorem}\label{doublestar}
		Let $D$ be an orientation of a double star that is not a path. Then $D$ is converse invariant  if and only if $D\cong -D$ or $D$ is formed by bridge-mirroring $\overrightarrow{K}^0_{1,2}$  or $\overrightarrow{K}^2_{1,2}$ at the central vertex.
	\end{theorem}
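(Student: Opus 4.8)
The plan is to treat the two directions separately. The ``if'' direction is immediate: if $D\cong -D$ then $f_T(D)=f_T(-D)$ for every tournament $T$; and if $D$ (or $-D$) is obtained by bridge-mirroring $\overrightarrow{K}^0_{1,2}$ or $\overrightarrow{K}^2_{1,2}$ at the central vertex, then $D$ (resp.\ $-D$) is converse invariant by Lemma~\ref{obs:4}, since $\overrightarrow{K}^i_{1,2}$ is an orientation of $P_3$ and hence converse invariant by Theorem~\ref{thm:SH}; as $D$ is converse invariant iff $-D$ is, this direction is done.

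For the ``only if'' direction, suppose $D$ is a converse-invariant orientation of a double star which is not a path, with centres $u$ and $v$; replacing $D$ by $-D$ if necessary, assume the central arc is $(u,v)$. Set $p=d^+(u)$, $q=d^-(u)$, $r=d^+(v)$, $s=d^-(v)$ (so $p,s\ge 1$); then $u$ carries $q$ in-leaves and $p-1$ out-leaves, $v$ carries $s-1$ in-leaves and $r$ out-leaves, and
\[
P_D(x)=(1+x)^p(1-x)^q+(1+x)^r(1-x)^s+(q+s-1)(1+x)+(p-1+r)(1-x).
\]
By Theorem~\ref{lem: polyn}, $P_D(x)=P_D(-x)$; subtracting and cancelling the (automatically vanishing) linear part, this is equivalent to $\psi_{p,q}=\psi_{s,r}$, where $\psi_{m,n}(x):=(1+x)^m(1-x)^n-(1-x)^m(1+x)^n-2(m-n)x$.

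Next I would solve this polynomial identity over the admissible ranges. A direct check shows $\psi_{m,n}=0$ exactly when $m=n$ or $\{m,n\}\in\{\{0,1\},\{0,2\}\}$, and that the only coincidence $\psi_{m,n}=\psi_{m',n'}$ with $(m,n)\ne(m',n')$ and both first coordinates positive is $\psi_{3,0}=\psi_{1,2}=2x^3$. Combined with $p,s\ge 1$ and the fact that $u$ and $v$ each have a leaf, the solutions fall into: (i) $(p,q)=(s,r)$, which a short argument shows is equivalent to $D\cong -D$; (ii) $\{(p,q),(s,r)\}=\{(3,0),(1,2)\}$, which unwinds to $D$ or $-D$ being the bridge-mirror of $\overrightarrow{K}^0_{1,2}$ (resp.\ $\overrightarrow{K}^2_{1,2}$) at its central vertex; and (iii) $(p,q)\ne(s,r)$ with $(p,q),(s,r)\in\{(k,k):k\ge 1\}\cup\{(2,0)\}$ --- an explicit list in which either both centres are balanced ($d^+=d^-$) with different degrees, or one centre is a source or a sink of degree $2$. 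In cases (i) and (ii) we are done; so it remains to show that no orientation of type (iii) is converse invariant.

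Here Theorem~\ref{lem: polyn} is of no further help, since $P_D=P_{-D}$; the type-(iii) cases must be killed by hand. For such a $D$ I would take $T$ to be a small non-self-converse tournament --- e.g.\ a directed triangle all of whose vertices dominate a transitive tournament of a suitable order (one checks $T\not\cong -T$ from the out-degree sequence, and chooses the order so that $|T|\ge|D|$). In such a $T$ the image of the balanced centre of a copy of $D$ is forced to be one specific vertex, after which the remaining images cascade; since a copy of $D$ requires an extra in-leaf (a source) precisely where a copy of $-D$ would require an extra out-leaf (a sink), and the triangle gadget supplies one of these but not the other, one obtains $f_T(D)>0=f_T(-D)$ (or the reverse), contradicting converse invariance. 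For the smallest instance --- $u$ of in- and out-degree $1$, $v$ of in- and out-degree $2$ --- the six-vertex tournament ``directed triangle dominating a transitive triangle'' already works, with $f_T(D)=3$ and $f_T(-D)=0$. The main obstacle is exactly this step: the polynomial invariant provably cannot separate the type-(iii) orientations from their converses, so one must design the tournaments and carry through the multinomial copy-count so that the forced extension of a partial copy is genuinely asymmetric --- this is where the argument branches into sub-cases; a secondary, purely mechanical, point is the exhaustive verification that $\psi_{p,q}=\psi_{s,r}$ has no admissible solutions beyond those in (i)--(iii).
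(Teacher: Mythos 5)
Your reduction to $\psi_{p,q}=\psi_{s,r}$ is correct, and it actually extracts more from Theorem~\ref{lem: polyn} than the paper does: the paper only uses the evaluation at $x=1$ (Corollary~\ref{cor:sources and sinks}) together with the top odd coefficients, so its residual case analysis is larger than your type (iii). Still, two issues remain, one minor and one serious. The minor one: your ``direct check'' of the solution set of $\psi_{p,q}=\psi_{s,r}$ ranges over all nonnegative integers, so it is not a finite verification; the claim is true, but it needs an actual argument (compare degrees, then the leading coefficient coming from $(1-x^2)^{\min(m,n)}\bigl((1+x)^{|m-n|}-(1-x)^{|m-n|}\bigr)$, then the coefficient of $x^3$) to rule out coincidences in unbounded degree.

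The serious gap is type (iii), which is where essentially all of the real work lies and where your proposal stops at one verified instance. Type (iii) consists of three infinite families: (a) both centres balanced with $d(u)=2j\neq 2k=d(v)$; (b) $u$ a source of out-degree $2$ and $v$ balanced of degree $2k$; (c) the converse of (b). Your gadget (a directed triangle dominating a transitive tournament) does not force the image of the balanced centre in general: in family (b) with $|D|=2k+2$ and transitive part $b_1\to\cdots\to b_m$, the vertex $b_i$ has out-degree $m-i$ and in-degree $i+2$, so a unique $i$ with both at least $k$ forces $m=2k-2$ and hence $|T|=2k+1<|D|$; any admissible order leaves at least two candidates, and the ``forced image plus cascade'' argument as described collapses. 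The paper closes these residual cases with two different constructions that you would need to supply and verify: for (b)/(c), a transitive tournament on $2k+2$ vertices with one arc reversed, where both counts are computed exactly ($f_T(D)=k(k+1)/2$ versus $f_T(-D)=k(k+1)$); for (a), a comparison argument --- take a transitive tournament $T$ (self-converse, so $f_T(D)=f_T(-D)$), reverse a single well-chosen arc to obtain $T'$, and show the reversal destroys at least one copy of $D$ but no copy of $-D$, whence $f_{T'}(D)<f_{T'}(-D)$ without computing either count. Until constructions of this kind are carried out for all of (a)--(c), the ``only if'' direction is not proved.
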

	\begin{proof}
		Let $u$ and $v$ be the central vertices of $D$. Since $D$ is not an orientation of a path, we have $d(u)\geq 2$,  $d(v)\geq 2$ and $\max\{d(u),d(v)\}\geq 3$. Without loss of generality, we assume $(u, v) \in A(D)$. Thus, $u$ cannot be a sink and $v$ cannot be a source. 
		Suppose that $D$ is converse invariant. We consider the following three cases.
		
		\vspace{2mm}		
		
		{\bf Case 1. Exactly one of $u$ and $v$ is a sink or source.}
		
		\YCb{	Note that there are two subcases: (i) $u$ is a source and $v$ is not a sink, and (ii) $u$ is not a source and $v$ is a sink.
			Now we consider Subcase (i). }
		By Corollary \ref{cor:sources and sinks}, we have
		$2^{d^+(u)}+2(d^-(v)-1)=2(d^+(u)-1)+2d^+(v)$ and so
		\begin{equation}\label{eq:t1}
			2^{d^+(u)}-2d^+(u)=2(d^+(v)-d^-(v)).
		\end{equation}
		Suppose  that $d^+(u)=2$.  Then $d^+(v)=d^-(v)$ by (\ref{eq:t1}). Since $\max\{d(u),d(v)\}\geq 3$, we have $d(v)\geq 4$. Assume that $d^+(v)=d^-(v)=n-1$. Then, $|V(D)|=2n$. Note that $n\geq3$ as $d(v)\geq 4$. Let $T$ be a tournament with $V(T)=\{v_1,v_2,\ldots,v_{2n}\}$ and $A(T)=(\{(v_i, v_j): 1\leq i<j\leq 2n\}\setminus \{(v_1, v_n)\})\cup \{(v_n, v_1)\}$. Now we consider $f_T(D)$ and $f_T(-D)$. Note that in $T$, only the out-degree and in-degree of $v_{n+1}$ are both at least $n-1$. Then the vertex $v$ of $D$ can only be mapped to the vertex $v_{n+1}$ of $T$. A simple calculation shows that $f_T(D)=\frac{n(n-1)}{2}$ and $f_T(-D)=n(n-1)$. Therefore, $f_T(D)\neq f_T(-D)$, a contraction.

		Suppose that  $d^+(u)\geq 3$. 
		Thus, if $d^-(v)\geq 2$, then by (\ref{eq:t1}) we have
		\YCb{\begin{eqnarray}\label{vu+}
				d^+(v)+d^-(v)-1\geq d^+(v)+1\geq 2^{d^+(u)-1}-d^+(u)+3\geq d^+(u)+1, 
		\end{eqnarray}}
		which implies \YCb{$d(v)=d^+(v)+d^-(v)\geq d^+(u)+2=d(u)+2$}. Then by (\ref{cod:cd2}), \YCb{$d(v)$} is even. And now, by (\ref{cod:cd}) \YCb{and the fact that $d(v)\geq d(u)+2$ (and therefore $\{v:d(v)=\Delta_o(D)\}=\emptyset$)}, we have $d^+(v)=d^-(v)$ which contradicts  (\ref{eq:t1}) and the fact that $d^+(u)\geq 3$. Note that if $d^-(v)=1$ and $d^+(v)\geq d^+(u)$, then (\ref{vu+}) also holds and we can use the same argument.
		
		It remains to consider the case when $d^-(v)=1$ and $d^+(v)\leq d^+(u)$. By (\ref{eq:t1}), we have   $d^+(v)=2^{d^+(u)-1}-d^+(u)+1$. Since $d^+(v)\leq d^+(u)$, we have $d^+(u)=3$ and therefore $d^+(v)=2$. This implies that $D$ is obtained by bridge-mirroring $\overrightarrow{K}^2_{1,2}$ at the central vertex. \GG{This completes Subcase (i).}
		
		\YCb{Observe that Subcase (ii) is Subcase (i) for $-D$ instead of $D$. Thus, in Subcase (ii) we can show that $D$ is obtained by bridge-mirroring $\overrightarrow{K}^0_{1,2}$ at the central vertex. }
		
		\vspace{2mm}	
		
		{\bf Case 2. $u$ is a source and $v$ is a sink.}
		
		By Corollary \ref{cor:sources and sinks}, we have $2^{d^+(u)}+2d^-(v)-2=2^{d^-(v)}+2d^+(u)-2$ and therefore
		\[2^{d^+(u)}-2d^+(u)=2^{d^-(v)}-2d^-(v).\]
		This implies $d^+(u)=d^-(v)$ and thus $D\cong -D$.
		
		\vspace{2mm}	
		
		{\bf Case 3. $u$ is not a source and $v$ is not a sink.}
		
		By Corollary \ref{cor:sources and sinks}, we have that the number of sinks equal to the number of sources which means 
		\begin{equation}\label{eq:t3}
			d^+(u)+d^+(v)=d^-(u)+d^-(v).
		\end{equation}
		
		Suppose that $d^+(v)=d^-(u)$. Then $d^+(u)=d^-(v)$ by (\ref{eq:t3}) and therefore $D\cong -D$. Suppose that $d^+(v)\neq d^-(u)$. Assume that $d^+(v)>d^-(u)$ (the case $d^+(v)<d^-(u)$ can be proved similarly). Since $u$ is not a source, $d^-(u)\geq 1$. Let $T$ be a transitive tournament with $V(T)=\{v_1,v_2,\ldots,v_{|D|}\}$ and $A(T)=\{(v_i, v_j): 1\leq i<j\leq |D|\}$. Let $T'$ be a tournament with $V(T')=V(T)$ and $A(T')=A(T) \setminus \{(v_1, v_{d^-(u)+1})\})\cup \{(v_{d^-(u)+1}, v_1)\}$. Note that $T'\not\cong T$ because $d^-(u)+1\geq 2$.
		We first consider $f_{T'}(D)$. 
		Since $(u, v) \in A(D)$, $u$ is mapped to a vertex in $T'$ with a subscript less than the subscript of the vertex to which $v$ is mapped in $T'$ for any copy of $D$ in $T'$. Let $a$ denote the number of copies of $D$ in $T$ where the vertex $u$ in $D$ is mapped to the vertex $v_{d^-(u)+1}$ in $T$. Note that $a\geq 1$. Then $f_{T'}(D)=f_T(D)-a$. Since $T\cong -T$, we have $f_T(D)=f_T(-D)$ and thus $f_{T'}(D)=f_T(-D)-a$. Now we consider $f_{T'}(-D)$. As $d^+(v)>d^-(u)$, in any copy of $-D$ in $T'$, $v$ must have been mapped to a vertex with a higher subscript than $d^-(u)+1$ and therefore $f_{T'}(-D)=f_T(-D)$. Then, $f_{T'}(D)=f_T(-D)-a=f_{T'}(-D)-a$. Hence,  $f_{T'}(D)\neq f_{T'}(-D)$, a contradiction. 		
	\end{proof}
	
	Corollary \ref{star} and Theorem \ref{doublestar} imply Theorem \ref{tree3}.

	\section{Conclusion}
	
	In this paper, we introduce a new digraph polynomial and use it to show a necessary condition for an orgraph to be converse invariant. In particular, we use this necessary condition to characterize converse invariant orientations of trees with diameters bounded by three. We would like to make the following conjecture.
	
	\begin{conjecture}\label{ctrees}
		Let $D$ be an orientation of a tree with maximum degree at least $3$. Then $D$ is converse invariant if and only if $D\cong -D$ or $D$ can be obtained by the bridge-mirroring operation recursively from an orientation of a path. 
	\end{conjecture}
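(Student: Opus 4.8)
\noindent The ``if'' direction of the conjecture should follow immediately from results already available. If $D\cong-D$ then $f_T(D)=f_T(-D)$ for trivial reasons; and if $D$ is obtained from an orientation $P$ of a path by iterated bridge-mirroring, then $P$ is converse invariant by Theorem~\ref{thm:SH} (as $\Delta(P)\le 2$) and each bridge-mirroring preserves converse invariance by Lemma~\ref{obs:4}, so $D$ is converse invariant. A prudent preliminary step, though, is to verify that the two families on the right-hand side are exhaustive: Lemma~\ref{obs:4} allows one to bridge-mirror \emph{any} converse invariant orgraph, so one should check whether bridge-mirroring a self-converse tree orientation that is not a path can yield converse invariant tree orientations outside the stated class. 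If it can, the base family in the statement must be enlarged (for instance to all self-converse tree orientations), and the plan below should be adjusted accordingly.

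The ``only if'' direction carries the real content, and the plan is to prove it by induction on $|D|$. The base cases are trees of diameter at most $3$ --- that is, stars and double stars --- which are precisely what Corollary~\ref{star} and Theorem~\ref{doublestar} already handle. For the inductive step, let $D$ be a converse invariant orientation of a tree with $\Delta\ge 3$ and diameter at least $4$, and suppose $D\not\cong-D$. The crucial structural claim will be that $D$ admits a \emph{reflection edge}: an edge $w_1w_2$ whose deletion splits $D$ into two components isomorphic as oriented graphs via an isomorphism taking $w_1$ to $w_2$; equivalently, $D=2D'^+_{w}$ for some oriented tree $D'$ (with $w$ the vertex of $D'$ corresponding to $w_1$ and $w_2$). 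To establish this I would combine the polynomial constraint $P_D(x)=P_{-D}(x)$ of Theorem~\ref{lem: polyn} (pinning down the multiset of out/in-degree pairs of $D$) with tournament constructions generalising the ``transitive tournament with a few reversed arcs'' device from the proof of Theorem~\ref{doublestar}: for any candidate non-reflective shape of $D$, one wants a near-transitive $T^\star$ in which the unique extremal copy of $D$ --- the one respecting the linear order of out-degrees along a longest dipath of $D$ --- is destroyed while the matching copy of $-D$ survives, so that $f_{T^\star}(D)\neq f_{T^\star}(-D)$.

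Granting $D=2D'^+_{w}$, the plan is then to show that $D'$ is itself converse invariant and to apply the induction hypothesis to $D'$ (which is strictly smaller, and, if it is not an orientation of a path, is by induction self-converse or a recursive bridge-mirror of one --- whence so is $D$). I expect the implication ``$2D'^+_{w}$ converse invariant $\Rightarrow D'$ converse invariant'', a converse of Lemma~\ref{obs:4}, to be the main obstacle. The natural attack is via the identity behind Lemma~\ref{obs:4}, namely $f_T(2D'^+_{w})=\sum_{T_1,T_2} f_{T_1}(D')f_{T_2}(D')$ summed over ordered pairs of disjoint order-$|D'|$ subtournaments of $T$ (and the analogous identity for $-D'$); equating the two for every $T$ yields a quadratic identity in the differences $f_S(D')-f_S(-D')$, and the delicate point is to choose the ambient $T$ --- say a fixed tournament $S$ of order $|D'|$ together with a large and tightly controlled remainder --- so that one such difference can be isolated and forced to vanish. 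Should this quadratic-to-linear reduction prove too slippery, the fallback is to avoid $D'$ altogether and run the near-transitive-tournament argument of Case~3 of the proof of Theorem~\ref{doublestar} directly on $D$, showing that any converse invariant tree orientation lacking the reflection symmetry of a bridge-mirror is distinguished from its converse by an explicit tournament.
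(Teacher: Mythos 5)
First, be aware that the statement you are proving is Conjecture~\ref{ctrees}: the paper offers no proof and explicitly describes it as possibly difficult to verify, so there is no argument of the authors to compare yours against. Your ``if'' direction is correct and complete (Theorem~\ref{thm:SH} plus Lemma~\ref{obs:4}). Your ``only if'' direction, however, is a programme rather than a proof: both of its load-bearing steps --- that every converse invariant, non-self-converse tree orientation of diameter at least $4$ admits a reflection edge, and that converse invariance of $2D'^+_w$ forces converse invariance of $D'$ (the converse of Lemma~\ref{obs:4}) --- are left entirely unestablished, and you yourself flag them as the main obstacles. As it stands the proposal does not settle the conjecture.

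More importantly, the caution in your first paragraph is not merely prudent; it appears to be fatal to the statement as written. Take $D'=\overrightarrow{K}^2_{1,4}$, which satisfies $D'\cong -D'$ and is hence converse invariant, and bridge-mirror it at an out-leaf $w$ to obtain $D=2D'^{+}_{w}$, writing $w'$ for the copy of $w$ in the second copy of $D'$. By Lemma~\ref{obs:4}, $D$ is converse invariant. But $D\not\cong -D$: the vertex $w'$ has $(d^+(w'),d^-(w'))=(0,2)$ while no vertex of $D$ has degree pair $(2,0)$, whereas $-D$ does. And $D$ cannot be obtained by recursive bridge-mirroring from an orientation of a path: the unique edge of the underlying $10$-vertex tree whose deletion leaves two components of equal order is the bridge $ww'$, and the resulting halves are orientations of $K_{1,4}$, which are not paths and admit no further balanced split. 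So the conjectured characterization is too narrow, and its base family must be enlarged (e.g.\ to all self-converse tree orientations, as you suggest) before any proof along your lines --- or any other --- can succeed. I would recommend stating and verifying this example explicitly, and then recasting your induction against the repaired statement; the reflection-edge step, if it can be proved, would then hand you a strictly smaller tree orientation to which the corrected inductive hypothesis applies.
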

	
	An {\em out-branching} $B^+_s$ (respectively, {\em in-branching}
	$B^-_s$) is an orientation of a tree in which each vertex $x\neq s$ has precisely one arc entering (leaving) it and $s$ has no arcs entering (leaving) it. The vertex $s$ is the root of $B^+_s$ (respectively, $B^-_s$).  Conjecture \ref{ctrees} might be difficult to verify. As a special case,  one can consider out-branchings or in-branchings for Conjecture \ref{ctrees}.

\end{document}